\newcommand{\R}{{\mathbb R}}
\newcommand{\ds}{\displaystyle}
\newcommand{\no}{\nonumber}
\newcommand{\be}{\begin{eqnarray}}
\newcommand{\ben}{\begin{eqnarray*}}
\newcommand{\en}{\end{eqnarray}}
\newcommand{\enn}{\end{eqnarray*}}
\newcommand{\ba}{\backslash}
\newcommand{\pa}{\partial}
\newcommand{\ov}{\overline}
\newcommand{\G}{\Gamma}
\newcommand{\Om}{\Omega}
\newcommand{\om}{\omega}
\newcommand{\al}{\alpha}
\newcommand{\la}{\lambda}
\newcommand{\La}{\Lambda}
\newcommand{\wi}{\widetilde}
\newtheorem{theorem}{Theorem}[section]
\newtheorem{lemma}[theorem]{Lemma}
\newtheorem{remark}[theorem]{Remark}
\begin{document}
\renewcommand{\theequation}{\arabic{section}.\arabic{equation}}
\begin{titlepage}
\title{\bf Inverse scattering by an inhomogeneous penetrable obstacle
in a piecewise homogeneous medium}
\author{Xiaodong Liu,\ \ Bo Zhang\\
LSEC and Institute of Applied Mathematics, AMSS\\
Chinese Academy of Sciences,\\
Beijing 100190, China\\
{\sf lxd230@163.com} (XL),\ {\sf b.zhang@amt.ac.cn} (BZ)}
\date{}
\end{titlepage}
\maketitle

\begin{abstract}
This paper is concerned with the inverse problem of scattering of
time-harmonic acoustic waves by an inhomogeneous penetrable obstacle
in a piecewise homogeneous medium.
The well-posedness of the direct problem is first established by using
the integral equation method.
We then proceed to establish two tools that play an important role for
the inverse problem: one is a mixed reciprocity relation and the other
is a priori estimates of the solution on some part of the interfaces
between the layered media.
For the inverse problem, we prove in this paper that both the penetrable
interfaces and the possible inside inhomogeneity can be uniquely determined
from a knowledge of the far field pattern for incident plane waves.

\vspace{.2in} {\bf Keywords:} Uniqueness, piecewise homogeneous medium,
penetrable obstacle, unique continuation principle, Holmgren's uniqueness
theorem, inverse scattering.
\end{abstract}

\section{Introduction}
\setcounter{equation}{0}

In this paper, we consider the scattering of time-harmonic acoustic
plane waves by an inhomogeneous, penetrable obstacle in a piecewise homogeneous
surrounding medium. In many practical applications, the background medium
might not be homogeneous and then may be modeled as a layered medium.
We might think of a problem in medical imaging where we have
a damaged tissue (which can be modeled as a penetrable obstacle)
inside the human body. It is clear that the human body is not a
homogeneous structure and may be modeled as a piecewise homogeneous medium.
Therefore, one possible model would then be a penetrable obstacle buried in a
piecewise homogeneous medium.

For simplicity, and without loss of generality, in this paper we
restrict ourself to the case where the obstacle is buried in a two-layered piecewise
homogeneous medium, as shown in Figure \ref{fig1}.
Precisely, let $\Om_{2}\subset\R^3$ be an open bounded region with a $C^{2}$ boundary $S_{1}$
such that the background $\R^3\ba\ov{\Om_{2}}$ is divided by means of a closed
$C^{2}$ surface $S_{0}$ into two connected domains $\Om_{0}$ and
$\Om_{1}$. Here, $\Om_{0}$ is a unbounded homogeneous medium,
$\Om_{1}$ is a bounded homogeneous medium and $\Om_2$ is a penetrable obstacle.
Let $\Om$ denote the complement of $\Om_{0}$, that is, $\Om:=\R^{3}\ba\ov{\Om_0}$.
Choose a large ball $B_{R}$ centered at the origin such that $\ov{\Om}\subset B_{R}$
and let $\Om_{R}:=B_{R}\ba\ov{\Om}$.

The problem of scattering of time-harmonic acoustic waves in a two-layered
background medium in $\R^3$ can be modeled by
 \be
 \label{0HE0}\Delta u+k_{0}^{2}u=0&&\quad \mbox{in}\;\;\Om_{0},\\
 \label{0HE1}\Delta v+k_{1}^{2}v=0&&\quad \mbox{in}\;\;\Om_{1},\\
 \label{0HE2}\Delta w+k_{2}^{2}nw=0&&\quad\mbox{in}\;\;\Om_{2},\\
 \label{0tbc}u-v=0,\;\;\frac{\pa u}{\pa\nu}-\la_0\frac{\pa v}{\pa\nu}=0&&\quad\mbox{on}\;\;S_0,\\
 \label{0tbc1}v-w=0,\;\;\frac{\pa v}{\pa\nu}-\la_1\frac{\pa w}{\pa\nu}=0&&\quad \mbox{on}\;\;S_{1},\\
 \label{0rc}\lim_{r\rightarrow\infty}r\left(\frac{\pa u^s}{\pa r}-ik_0u^s\right)=0,&&\quad r=|x|,
 \en
where $\nu$ is the unit outward normal to the interface $S_{j}\;(j=0,1)$ and
$n\in C^{0,\al}(\ov{\Om_{2}}), 0<\al<1$, is the refractive index of an inhomogeneous
medium with $\Re(n)>0$ and $\Im(n)\geq0$.
Here, the total field $u=u^{s}+u^{i}$ is given as the
sum of the unknown scattered wave $u^{s}$ which is required to
satisfy the Sommerfeld radiation condition (\ref{0rc}) and the incident
plane wave $u^{i}=e^{ik_{0}x\cdot d}$,
$k_{j}$ is the positive wave
number given by $k_{j}=\om_{j}/c_{j}$ in terms of the frequency
$\om_{j}$ and the sound speed $c_{j}$ in the corresponding medium $\Om_{j}\;(j=0,1,2)$.
The distinct wave numbers $k_{j}\;(j=0,1,2)$ correspond to the fact that the medium
consists of several physically different materials.
On the interfaces $S_{0}$ and $S_1$, the so-called "transmission
conditions" (\ref{0tbc}) and (\ref{0tbc1}) with two positive constants
$\la_0$ and $\la_1$ are imposed, respectively, which represent the continuity of the medium and
equilibrium of the forces acting on them.
\begin{figure}[htbp]
\begin{center}
\scalebox{0.35}{\includegraphics{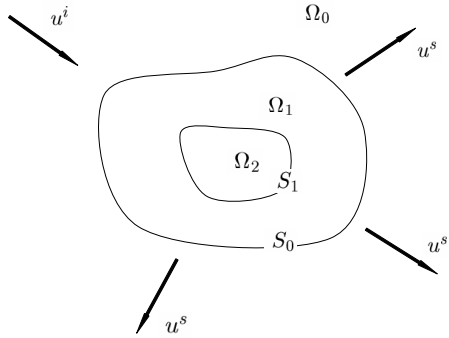}} \caption{Scattering in a
two-layered background medium }\label{fig1}
\end{center}
\end{figure}

The {\em direct problem} is to look for a set of functions $(u, v, w)$ satisfying
(\ref{0HE0})-(\ref{0rc}). We will establish the well-posedness of the direct problem,
employing the integral equation method in the next section.
We will also establish a mixed reciprocity relation and prove a priori estimates of
the solution on some part of the interfaces $S_{j}$ ($j=0,1$). These results will
play an important role in the proof of the uniqueness results in the inverse problem.

It is well known that $u^{s}(x)$ has the following asymptotic representation
 \be\label{s-f}
u^{s}(x,d)=\frac{e^{ik_{0}|x|}}{|x|}\left\{u^{\infty}(\widehat{x},d)
+O(\frac{1}{|x|})\right\}\;\qquad \mbox{as }\;|x|\rightarrow\infty
 \en
uniformly for all directions $\widehat{x}:=x/|x|$, where the
function $u^{\infty}(\widehat{x},d)$ defined on the unit sphere $S$
is known as the far field pattern with $\widehat{x}$ and $d$
denoting, respectively, the observation direction and the incident direction.

The {\em inverse problem} we consider in this paper is, given the
wave numbers $k_{j}$ ($j=0,1,2$), the positive constants $\la_{0},
\la_1$ and the far field pattern $u^{\infty}(\widehat{x},d)$ for all
incident plane waves with incident direction $d\in S$, to determine
the interfaces $S_{j}$ ($j=0,1$) and the refractive index $n$.
Precisely, we will study the uniqueness issue and prove that
the interfaces $S_{j}$ ($j=0,1$) and the inhomogeneity $n$ can be uniquely identified
from a knowledge of the far field pattern.
It should be remarked that the uniqueness issue for inverse problems is of theoretical
interest and is required in order to proceed to efficient numerical methods of solutions.

For the unique determination of an inhomogeneity with compact support in a
homogeneous background medium, we refer the reader to H\"{a}hner \cite{Ha96},
Nachman \cite{N88}, Novikov \cite{No88} and Ramm \cite{Ramm88,Ramm92}.
We also refer the reader to the monographs of Colton and Kress \cite{CK} and
Kirsch \cite{KBook} and the habilitationsschrift of H\"{a}hner \cite{Hbook}
for a comprehensive discussion. Kirsch and P\"{a}iv\"{a}rinta \cite{KP} gave the first
uniqueness result for determining the interior inhomogeneity in the case of
a known inhomogeneous background medium.

However, to the authors' knowledge, there are few uniqueness results for the inverse
obstacle scattering in a piecewise homogeneous medium. In the case when the obstacle
$\Om_{2}$ is impenetrable, based on a generalization of the mixed reciprocity relation,
Liu, Zhang and Hu \cite{LZH} proved that if the interface $S_{0}$ is known then
the obstacle and its physical property can be uniquely determined by the far field pattern.
Later, motivated by Kirsch and Kress \cite{KK93} and Kirsch and P\"{a}iv\"{a}rinta \cite{KP}
for the uniqueness proof of determining a penetrable boundary,
Liu and Zhang \cite{LZ2} extended this result to the case when the interface $S_{0}$
is unknown and proved that the interface $S_{0}$ can also be uniquely recovered.
Note that all the results in \cite{LZH, LZ2} are also available for the case of a multilayered
medium and can be proved similarly. Thus, Liu and Zhang \cite{LZ2} have in fact proved
a uniqueness result for the case when the obstacle $\Om_{2}$ is penetrable with
a homogeneous interior. This result has also been proved by Athanasiadis, Ramm and
Stratis \cite{ARS} and Yan \cite{YG} using a different method.

In this paper, we consider the case where the obstacle $\Om_2$ is penetrable with
an inhomogeneous interior and use the ideas in \cite{KK93, KP, LZ2} to prove
the unique determination of the interfaces $S_{j}$ ($j=0,1$) in Section \ref{sec3}.
In Section \ref{sec4}, we will show that the refractive index $n$ is also uniquely determined.
To do this, we first establish a completeness result, that is, the normal derivatives
$\{{\pa w(\cdot;d)}/{\pa\nu},\;d\in S\}$, corresponding to incident plane waves
with all directions $d\in S$, are complete in $L^{2}(S_1)$. Based on this result, we then
establish an orthogonality relation that enables us to prove the unique determination
of the inhomogeneity $n$ with help of the ideas from the fundamental work of
Sylvester and Uhlmann \cite{SU87}.


\section{The direct scattering problem}\label{sec2}
\setcounter{equation}{0}

In this section we will establish the well-posedness of the direct problem via
the integral equation method. We also prove a mixed reciprocity relation
and a priori estimates on some part of the interfaces $S_0,S_1$ of the solution
with the help of its explicit representation in a combination of layer and volume potentials.
We assume hereafter that $k_{0}$, $k_{1}$, $k_{2}$, $\la_{0}$, $\la_{1}$ are given positive
numbers and that $k_{2}^{2}$ is not a Neumann eigenvalue of $\Delta w+k_{2}^{2}nw=0$ in $\Om_2$.
In this paper, we shall use $C$ to denote generic constants whose values may change in different
inequalities but always bounded away from infinity.

\begin{remark}\label{r1} {\rm
The assumption on $k_{2}^{2}$ holds provided the refractive index $n$
satisfies one of the following two conditions:
\begin{description}
\item (1) $\Im(n)>0$ on some non-empty open subset $\Om^{\ast}$ of $\Om_{2}$;
\item (2) $1-n$ is compactly supported in $\Om_2$.
\end{description}
Both cases can be proved by the unique continuation principle.
}
\end{remark}

As incident fields $u^{i}$, plane wave $\ds e^{ik_{0}x\cdot d}$ and point source
$\Phi_{j}(\cdot,z_{j}),\;( z_{j}\in\Om_{j},\;j=0,1)$, (cf. (\ref{Phidef}) below)
are of special interest. Denote by $u^{s}(\cdot,d)$ the scattered field for an
incident plane wave $u^{i}(\cdot,d)$ with incident direction $d\in S$ and by
$u^{\infty}(\cdot,d)$ the corresponding far field pattern. The
scattered field for an incident point source $\Phi_{j}(\cdot,z_{j})$ with
source point $z_{j}\in \Om_{j}$ is denoted by $u^{s}(\cdot, z_{j})$ and the
corresponding far field pattern by $\Phi^{\infty}(\cdot,z_{j})$ $(j=0,1)$.

The direct problem is to look for a set of functions $u\in C^2(\Om_0)\cap C^{1,\al}(\ov{\Om}_0)$,
$v\in C^{2}(\Om_1)\cap C^{1,\al}(\ov{\Om}_1)$ and $w\in C^2(\Om_2)\cap C^{1,\al}(\ov{\Om}_2)$
satisfying the following boundary value problem
 \be
 \label{HE0}\Delta u+k_{0}^{2}u=0&&\qquad \mbox{in}\ \Om_{0},\\
 \label{HE1}\Delta v+k_{1}^{2}v=0&&\qquad \mbox{in}\ \Om_{1},\\
 \label{HE2}\Delta w+k_{2}^{2}nw=0&&\qquad \mbox{in}\ \Om_{2},\\
 \label{tbc0}u-v=f,\;\;\frac{\pa u}{\pa\nu}-\la_0\frac{\pa v}{\pa\nu}=g&&\qquad\mbox{on}\;S_0,\\
 \label{tbc1}v-w=p,\;\;\frac{\pa v}{\pa\nu}-\la_1\frac{\pa w}{\pa\nu}=q&&\qquad \mbox{on}\; S_{1},\\
 \label{rc}\lim_{r\rightarrow\infty}r\left(\frac{\pa u}{\pa r}-ik_{0}u\right)=0&&\qquad r=|x|
 \en
where $f\in C^{1,\al}(S_{0})$, $g\in C^{0,\al}(S_{0})$,
$p\in C^{1,\al}(S_{1})$ and $q\in C^{0,\al}(S_{1})$ are given functions
from H\"{o}lder spaces with exponent $0<\al<1$.

\begin{remark}\label{r2} {\rm
For the scattering problem, if the incident field $u^{i}$ is the plane wave
$\ds e^{ik_{0}x\cdot d}$ or the point source $\Phi_0(\cdot,z_0)$ with $z_0\in\Om_0$
then
$\ds f=-u^{i}|_{S_{0}},\;g=-{\pa u^i}/{\pa\nu}|_{S_0},\;p=0,\;q=0$,
and if the incident field $u^{i}$ is the point source $\Phi_1(\cdot,z_1)$ with $z_1\in\Om_1$
then
$\ds f=u^i|_{S_0},\;g={\pa u^i}/{\pa\nu}|_{S_0},\;p=-u^i|_{S_1},\;q=-{\pa u^i}/{\pa\nu}|_{S_1}$.
}
\end{remark}

\begin{theorem}\label{uni.direct}
The boundary value problem $(\ref{HE0})-(\ref{rc})$ admits at most one solution.
\end{theorem}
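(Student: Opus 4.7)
The plan is the standard energy/Rellich argument adapted to a three-layer configuration. Let $(u_1,v_1,w_1)$ and $(u_2,v_2,w_2)$ be two solutions of (\ref{HE0})--(\ref{rc}) with the same data $f,g,p,q$ and set $(u,v,w) := (u_1-u_2,v_1-v_2,w_1-w_2)$. Then $(u,v,w)$ solves the fully homogeneous problem (i.e.\ with $f=g=p=q=0$), and our goal is to show $u \equiv 0$, $v \equiv 0$, $w \equiv 0$.

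First I would apply Green's first identity separately in $\Om_0 \cap B_R$, $\Om_1$ and $\Om_2$, being careful with the orientation of $\nu$ (outward from the bounded region on each interface). Multiplying the $\Om_1$-identity by $\la_0$ and the $\Om_2$-identity by $\la_0\la_1$ and adding, the transmission conditions (\ref{tbc0})--(\ref{tbc1}) with $f=g=p=q=0$ make the interface integrals on $S_0$ and $S_1$ cancel in pairs, yielding
\begin{equation*}
\int_{\Om_0\cap B_R}\!\bigl(|\nabla u|^2 - k_0^2|u|^2\bigr)\,dx
+ \la_0\!\int_{\Om_1}\!\bigl(|\nabla v|^2 - k_1^2|v|^2\bigr)\,dx
+ \la_0\la_1\!\int_{\Om_2}\!\bigl(|\nabla w|^2 - k_2^2 n|w|^2\bigr)\,dx
= \int_{\pa B_R}\!\bar u\,\frac{\pa u}{\pa r}\,ds.
\end{equation*}
Taking imaginary parts and using $\la_0,\la_1,k_2>0$ together with $\Im(n)\ge 0$ gives $\Im\int_{\pa B_R}\bar u\,\pa_r u\,ds \le 0$.

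Next, combining this inequality with the Sommerfeld radiation condition (\ref{rc}) in the classical way (expanding $\int_{\pa B_R}|\pa_r u - ik_0 u|^2 ds \to 0$), I conclude that $\int_{\pa B_R}|u|^2\,ds\to 0$ as $R\to\infty$. Rellich's lemma, applied to $u\in C^2(\Om_0)$ satisfying the Helmholtz equation (\ref{HE0}) and the radiation condition, then forces $u\equiv 0$ in $\Om_0$. The transmission conditions on $S_0$ now read $v=0$ and $\la_0\pa_\nu v=0$ on $S_0$, i.e.\ $v$ has vanishing Cauchy data on the analytic (or at least $C^2$) surface $S_0$. Applying Holmgren's uniqueness theorem (or equivalently extending by zero across $S_0$ and invoking the unique continuation principle) to the Helmholtz equation (\ref{HE1}) with constant coefficient $k_1^2$, I obtain $v\equiv 0$ in $\Om_1$.

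The last step — where the inhomogeneous interior appears — is the only non-routine one and is the main obstacle. From $v\equiv 0$ and the transmission conditions on $S_1$ I get $w=0$ and $\pa_\nu w = 0$ on $S_1=\pa\Om_2$. If $n$ were constant I could again use Holmgren directly; for variable $n\in C^{0,\alpha}$ I would instead extend $w$ by zero to a neighbourhood of $\ov{\Om_2}$ and invoke the unique continuation principle for the elliptic equation $\Delta w + k_2^2 n w = 0$ to conclude $w\equiv 0$ in $\Om_2$. Alternatively, noting that $\pa_\nu w = 0$ on $S_1$ realises $w$ as a Neumann eigenfunction, the standing assumption that $k_2^2$ is not a Neumann eigenvalue of $\Delta + k_2^2 n$ in $\Om_2$ (guaranteed by the hypotheses of Remark~\ref{r1}) immediately gives $w\equiv 0$. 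Either route completes the proof.
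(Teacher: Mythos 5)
Your proposal is correct and follows essentially the same route as the paper: Green's first identity weighted by $\la_0$ and $\la_0\la_1$ so the interface terms cancel, imaginary parts plus $\Im(n)\ge 0$ and the radiation condition to invoke Rellich's lemma and unique continuation for $u$, Holmgren for $v$, and then the standing assumption that $k_2^2$ is not a Neumann eigenvalue of $\Delta w+k_2^2nw=0$ in $\Om_2$ to conclude $w\equiv 0$ (the paper uses exactly this last step rather than your alternative zero-extension/unique-continuation argument, but both are valid).
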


\begin{proof}
Clearly, it is enough to show that $u=0$ in $\Om_{0}$, $v=0$ in $\Om_1$ and
$w=0$ in $\Om_2$ for the corresponding homogeneous problem,
that is, $f=g=0$ on $S_0$ and $p=q=0$ on $S_1$.
Applying Green's first theorem over $\Om_{R}$, we obtain that
\ben
 \int_{\pa B_{R}}u\frac{\pa\ov{u}}{\pa\nu}ds
  =\int_{\Om_{R}}\left(u\Delta\ov{u}+|\nabla u|^{2}\right)dx
  +\int_{S_{0}}u\frac{\pa\ov{u}}{\pa\nu}ds.
\enn
Using Green's first theorem over $\Om_{1}$ and $\Om_2$ and taking into
account the transmission conditions (\ref{tbc0}) and (\ref{tbc1}), we have
 \be\label{2.7}
\int_{\pa B_{R}}u\frac{\pa\ov{u}}{\pa\nu}ds
 &=&\int_{\Om_{R}}\left(u\Delta\ov{u}+|\nabla u|^{2}\right)dx
    +\la_{0}\int_{\Om_{1}}\left(v\Delta\ov{v}+|\nabla v|^{2}\right)dx\cr
  &&+\la_{0}\la_{1}\int_{\Om_{2}}\left(w\Delta\ov{w}+|\nabla w|^{2}\right)dx.
 \en
Making use of equations (\ref{HE0})-(\ref{HE2}) and taking the
imaginary part of (\ref{2.7}) we obtain, on noting that
$k_{0}^{2},\;k_{1}^{2},\;k_{2}^{2},\;\la_{0},\;\la_{1}$ are positive numbers
and $\Im{n}\geq0$, that
 \ben
  \Im\int_{\pa B_{R}}u\frac{\pa \ov{u}}{\pa\nu}ds
  =-k_{2}^{2}\la_{0}\la_{1}\Im\int_{\Om_{2}}\ov{n}|w|^{2}dx\geq0.
 \enn
Thus, by Rellich's Lemma \cite{CK}, it follows that $u=0$ in $\R^3\ba B_R$.
By the unique continuation principle, we have $u=0$ in $\Om_{0}$.
Holmgren's uniqueness theorem \cite{Kr} and the homogeneous
transmission boundary conditions (\ref{tbc0}) imply that $v=0$ in $\Om_1$.
Finally, the transmission boundary conditions (\ref{tbc1})
and the assumption on $k_{2}^{2}$ give that $w=0$ in $\Om_2$, which completes the
proof of the theorem.
\end{proof}

For the proof of existence of solutions we need the fundamental solution $\Phi_j$
of the Helmholtz equation with wave number $k_{j}\; (j=0,1,2)$ given by
 \be\label{Phidef}
 \Phi_{j}(x,y)=\frac{e^{ik_{j}|x-y|}}{4\pi |x-y|},\qquad x,y\in\R^{3},\;\; x\neq y.
 \en
For $i=0,1$ and $j=0,1,2$, define the single- and double-layer potentials $S_{i,j}$
and $K_{i,j}$, respectively, by
 \ben
 (\wi{S}_{i,j}\phi)(x):= \int_{S_{i}}\Phi_{j}(x,y)\phi(y)ds(y),\qquad &\ x\in \R^{3}\ba S_{i},\\
 (\wi{K}_{i,j}\phi)(x):= \int_{S_{i}}\frac{\pa\Phi_{j}(x,y)}{\pa\nu(y)}\phi(y)ds(y),
   \qquad &\ x\in \R^{3}\ba S_{i}
 \enn
and the normal derivative operators $K^{'}_{i,j}$ and $T_{i,j}$ by
 \ben
 (\wi{K}^{'}_{i,j}\phi)(x):=\frac{\pa}{\pa\nu(x)}\int_{S_{i}}\Phi_{j}(x,y)\phi(y)ds(y),
   \qquad &\ x\in \R^{3}\ba S_{i},\\
 (\wi{T}_{i,j}\phi)(x)
 :=\frac{\pa}{\pa\nu(x)}\int_{S_{i}}\frac{\pa\Phi_{j}(x,y)}{\pa \nu(y)}\phi(y)ds(y),
   \qquad&\ x\in \R^{3}\ba S_{i}.
 \enn
The restrictions on $S_{i}$ of these operators will be denoted by $S_{i,j}$, $K_{i,j}$,
$K^{'}_{i,j}$ and $T_{i,j}$ ($i=0,1,j=0,1,2$), respectively.
To prove the existence of solutions, we also need the volume potential
 \ben
 (V\phi)(x):=k_{2}^{2}\int_{\Om_{2}}\Phi_{2}(x,y)[n(y)-1]\phi(y)dy,\qquad &\ x\in \R^{3}
 \enn
and its normal derivative operator denoted by $V^\prime$.
For mapping properties of these operators in the classical spaces of continuous and H\"{o}lder
continuous functions we refer the reader to the monographs of Colton and Kress \cite{CK83,CK}.

\begin{theorem}\label{well-posedness}
The boundary value problem $(\ref{HE0})-(\ref{rc})$ has a unique solution. Further, the solution
satisfies the estimate
 \be\label{estimate}
 \|u\|_{1,\al,\ov{\Om}_R}+\|v\|_{1,\al,\ov{\Om}_1}+\|w\|_{1,\al,\ov{\Om}_2}
 \leq C(\|f\|_{1,\al,S_0}+\|g\|_{0,\al,S_0}+\|p\|_{1,\al,S_1}+\|q\|_{0,\al,S_1})
 \en
for some positive constant $C=C(\al)$.
\end{theorem}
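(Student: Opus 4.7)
The plan is to reduce the transmission problem (\ref{HE0})--(\ref{rc}) to a system of boundary integral equations on $S_0\cup S_1$ coupled with a Lippmann--Schwinger equation on $\Om_2$, and then to invoke the Fredholm alternative, using Theorem \ref{uni.direct} for injectivity. Motivated by the standard combined layer ansatz at transmission interfaces and by the Lippmann--Schwinger treatment of an interior inhomogeneity, I would seek the solution in the form
\begin{equation*}
u = \wi{S}_{0,0}\varphi_0 + \wi{K}_{0,0}\psi_0\ \text{in}\ \Om_0,\qquad v = \sum_{i=0}^{1}\bigl(\wi{S}_{i,1}\varphi_i + \wi{K}_{i,1}\psi_i\bigr)\ \text{in}\ \Om_1,
\end{equation*}
\begin{equation*}
w = \wi{S}_{1,2}\varphi_1 + \wi{K}_{1,2}\psi_1 + Vw\ \text{in}\ \Om_2,
\end{equation*}
(possibly modified by density weights on each interface) with unknowns $\varphi_i\in C^{0,\al}(S_i)$ and $\psi_i\in C^{1,\al}(S_i)$, $i=0,1$. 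By construction $u$ is radiating and solves Helmholtz with wave number $k_0$ in $\Om_0$, $v$ solves Helmholtz with wave number $k_1$ in $\Om_1$, and the identity $\Delta(Vw)+k_2^2(Vw)=-k_2^2(n-1)w$ turns the Lippmann--Schwinger representation into $\Delta w+k_2^2 n w=0$ in $\Om_2$.

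Taking boundary traces from either side of $S_0$ and $S_1$ via the standard jump relations and substituting into (\ref{tbc0})--(\ref{tbc1}) gives a linear system of the form $(I+A)\Phi = F$ for $\Phi = (\varphi_0,\psi_0,\varphi_1,\psi_1,w)$, posed on
\begin{equation*}
\mathcal{X} := C^{0,\al}(S_0)\times C^{1,\al}(S_0)\times C^{0,\al}(S_1)\times C^{1,\al}(S_1)\times C^{0,\al}(\ov{\Om}_2),
\end{equation*}
where $F$ is built from the data $(f,g,p,q)$. The identity part comes from the $\pm\tfrac12$-jumps of $\wi{K}_{i,j}$ and $\wi{K}'_{i,j}$ across $S_i$, together with the $I$ in the volume equation $(I-V)w = \wi{S}_{1,2}\varphi_1 + \wi{K}_{1,2}\psi_1$. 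I would then verify that $A$ is compact on $\mathcal{X}$: since $S_0$ and $S_1$ are of class $C^2$, the weakly singular operators $S_{i,j}, K_{i,j}, K'_{i,j}$ are compact between the relevant Hölder spaces, while $V$ and $V'$ are compact on Hölder spaces over $\ov{\Om}_2$. The main obstacle is the hypersingular operator $T_{i,j}$ that appears from the normal derivative of $\wi{K}_{i,j}$: its kernel has a principal part that is independent of the wave number, so each difference $T_{i,j}-T_{i,k}$ is compact, and one must arrange the ansatz so that the non-compact reference pieces $T_{i,k}$ cancel pairwise across each interface when combined with the transmission weights $\la_0,\la_1$. This is the essential technical step.

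With $I+A$ Fredholm of index zero on $\mathcal{X}$, injectivity is the last ingredient. If $(I+A)\Phi=0$, then $(u,v,w)$ constructed from $\Phi$ via the ansatz solves the homogeneous transmission problem and therefore vanishes by Theorem \ref{uni.direct}. Extending the layer-potential representations of $u$ across $S_0$ (resp.\ of $w$ across $S_1$) and matching with the jump relations, together with the assumption on $k_2^2$ from Remark \ref{r1}, then forces $\varphi_i=\psi_i=0$ and $w\equiv 0$ in $\Om_2$. The Fredholm alternative yields a unique $\Phi\in\mathcal{X}$ with $\|\Phi\|_{\mathcal{X}}\leq C\|F\|_{\mathcal{X}}$, and the a priori bound (\ref{estimate}) follows by combining this with the $C^{1,\al}$-continuity of $\wi{S}_{i,j}$, $\wi{K}_{i,j}$ and $V$ on the closed regions $\ov{\Om}_R$, $\ov{\Om}_1$, $\ov{\Om}_2$.
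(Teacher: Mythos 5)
Your proposal follows essentially the same route as the paper: a combined single/double-layer ansatz on $S_0$ and $S_1$ plus a Lippmann--Schwinger representation $w=\wi{K}_{1,2}\psi_1+\wi{S}_{1,2}\phi_1+Vw$ in $\Om_2$, reduction to a compact perturbation of the identity, injectivity via Theorem \ref{uni.direct} together with extension of the potentials across the interfaces and the jump relations, and the Riesz--Fredholm alternative; in particular you correctly identify the key technical point, which the paper implements by weighting the double-layer densities with $\la_0$ and $\la_1$ so that the hypersingular operators appear only in compact differences $T_{i,j}-T_{i,k}$. The only cosmetic difference is that the paper carries $u,v,w$ along as extra components of the unknown vector so that the estimate (\ref{estimate}) drops out directly from the boundedness of $(I+A)^{-1}$, whereas you recover it afterwards from the mapping properties of the potentials.
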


\begin{proof}
Following \cite{CK83} and \cite{CK} we seek the unique solution in the form
 \be
 \label{u}u&=&\la_0\wi{K}_{0,0}\psi_{0}+\wi{S}_{0,0}\phi_{0}
          \hspace{3.1cm}\qquad \mbox{in}\;\Om_{0},\\
 \label{v}v&=&\wi{K}_{0,1}\psi_0+\wi{S}_{0,1}\phi_0+\la_1\wi{K}_{1,1}\psi_1+\wi{S}_{1,1}\phi_1
      \qquad \mbox{in}\;\Om_{1},\\
 \label{w}w&=&\wi{K}_{1,2}\psi_{1}+\wi{S}_{1,2}\phi_{1}+Vw
      \hspace{2.45cm}\qquad \mbox{in}\;\Om_{2}
 \en
with four densities $\psi_{0}\in C^{1,\al}(S_{0})$, $\phi_{0}\in C^{0,\al}(S_{0})$,
$\psi_{1}\in C^{1,\al}(S_{1})$, $\phi_{1}\in C^{0,\al}(S_{1})$.
Then from the jump relations we see that the potentials $u,\,v$ and $w$ given by
$(\ref{u})-(\ref{w})$ solve the boundary value problem $(\ref{HE0})-(\ref{rc})$
provided the densities satisfy the system of integral equations:
 \be
 \label{psi0}\psi_{0}+\la(\la_0K_{0,0}-K_{0,1})\psi_{0}+\la(S_{0,0}-S_{0,1})\phi_{0}
    -\la\la_{1}\wi{K}_{1,1}\psi_{1}-\la\wi{S}_{1,1}\phi_{1}=\la f\qquad \mbox{on}\; S_{0},\\
 \no\phi_0+\la\la_0(T_{0,1}-T_{0,0})\psi_0+\la(\la_0K^{'}_{0,1}-K^{'}_{0,0})\phi_0\hspace{6.4cm}\\
 \label{phi0}\hspace{6cm}+\la\la_0\la_{1}\wi{T}_{1,1}\psi_{1}+\la\la_0\wi{K}^{'}_{1,1}\phi_{1}
          =-\la g\qquad \mbox{on}\; S_{0},\\
 \no\psi_{1}+\mu\wi{K}_{0,1}\psi_{0}+\mu\wi{S}_{0,1}\phi_{0}\hspace{10cm}\\
 \label{psi1}\hspace{3.5cm}+\mu(\la_{1}K_{1,1}-K_{1,2})\psi_{1}+\mu(S_{1,1}-S_{1,2})\phi_{1}-\mu Vw
          =\mu p\qquad\mbox{on}\; S_{1},\\
 \no\phi_{1}-\mu\wi{T}_{0,1}\psi_{0}-\mu\wi{K}^{'}_{0,1}\phi_{0}\hspace{9.95cm}\\
 \label{phi1}\hspace{2.5cm}+\mu\la_1(T_{1,2}-T_{1,1})\psi_1+\mu(\la_{1}K^{'}_{1,2}-K^{'}_{1,1})\phi_1
        +\mu\la_{1}V^{'}w=-\mu q\qquad \mbox{on}\;S_{1},
 \en
where $\la={2}/({\la_0+1})$ and $\mu={2}/({\la_1+1})$.

Define the product space
 $$
 X:=C^{1,\al}(\ov{\Om_R})\times C^{1,\al}(\ov{\Om_1})\times C^{1,\al}(\ov{\Om_2})
 \times C^{1,\al}(S_0)\times C^{0,\al}(S_0)\times C^{1,\al}(S_1)\times C^{0,\al}(S_1)
 $$
and introduce the operator $A:X\rightarrow X$ given in the matrix form:
 \ben
\left(\begin{matrix}
 0&0&0&\la_0\wi{K}_{0,0}&\wi{S}_{0,0}&0&0\\
 0&0&0&\wi{K}_{0,1}&\wi{S}_{0,1}&\la_1\wi{K}_{1,1}&\wi{S}_{1,1}\\
 0&0&-V&0&0&\wi{K}_{1,2}&\wi{S}_{1,2}\\
 0&0&0&\la(\la_0K_{0,0}-K_{0,1})&\la(S_{0,0}-S_{0,1})&-\la\la_{1}\wi{K}_{1,1}&-\la\wi{S}_{1,1}\\
 0&0&0&\la\la_0(T_{0,1}-T_{0,0})&\la(\la_{0}K^{'}_{0,1}-K^{'}_{0,0})
      &\la\la_0\la_1\wi{T}_{1,1}&\la\la_0\wi{K}^{'}_{1,1}\\
 0&0&-\mu V&\mu\wi{K}_{0,1}&\mu\wi{S}_{0,1}&\mu(\la_{1}K_{1,1}-K_{1,2})&\mu(S_{1,1}-S_{1,2})\\
 0&0&\mu\la_{1}V^{'}&-\mu\wi{T}_{0,1}&-\mu\wi{K}^{'}_{0,1}&\mu\la_{1}(T_{1,2}-T_{1,1})
                                                              &\mu(\la_{1}K^{'}_{1,2}-K^{'}_{1,1})\\
\end{matrix}\right)
 \enn
which is compact since all its entries are compact.
Then the system $(\ref{u})-(\ref{phi1})$ can be rewritten in the abbreviated form
 \be\label{matrix-form}
 (I+A)U=R,
 \en
where $U=(u,v,w,\psi_0,\phi_0,\psi_1,\phi_1)^T$, $R=(0,0,0,\la f,-\la g,\mu p,-\mu q)^T$
and $I$ is the identity operator.
Thus, the Riesz-Fredholm theory is applicable to establish the existence of solutions
to the system (\ref{matrix-form}) if we can prove the uniqueness of solutions to the system.
To this end, let $U$ be a solution of the homogeneous system corresponding to (\ref{matrix-form})
(that is, the system (\ref{matrix-form}) with $R=0$). Then it is enough to show that $U=0$.

We first prove that $\psi_{0}=\phi_{0}=0$ on $S_{0}$. From the system (\ref{matrix-form}) or
(\ref{u})-(\ref{phi1}) with $\la f=-\la g=0$ (since $R=0$)
it is known that $u, v, w$ defined in (\ref{u})-(\ref{w}) satisfy the problem
$(\ref{HE0})-(\ref{rc})$ with $f=g=0$. Thus, by the uniqueness Theorem \ref{uni.direct},
$u=0$ in $\Om_0$, $v=0$ in $\Om_1$ and $w=0$ in $\Om_2$.

Now define
 \ben
\wi{v}&:=&\wi{K}_{0,1}\psi_{0}+\wi{S}_{0,1}\phi_{0}+\la_1\wi{K}_{1,1}\psi_{1}
          +\wi{S}_{1,1}\phi_{1}\;\quad\mbox{in}\;\;\Om_{0},\\
\wi{u}&:=&-\wi{K}_{0,0}\psi_{0}-\frac{1}{\la_{0}}\wi{S}_{0,0}\phi_{0}\;\hspace{3.15cm}\mbox{in}\;\;\Om.
 \enn
Then by the jump relations for single- and double-layer potentials
we have
 \be\label{htbc1}
 \wi{v}-v=\psi_{0},&&\;\;\;\frac{1}{\la_0}u+\wi{u}=\psi_{0}\hspace{1.55cm}\mbox{on}\;\;S_0,\\ \label{htbc2}
 \frac{\pa \wi{v}}{\pa\nu}-\frac{\pa v}{\pa\nu}=-\phi_{0},&&\;\;\;
  \frac{\pa u}{\pa\nu}+\la_0\frac{\pa \wi{u}}{\pa\nu}=-\phi_{0}\qquad\mbox{on}\;\;S_0.
 \en
Thus, $\wi{v}$ and $\wi{u}$ solve the homogeneous transmission problem
 \ben
 \Delta \wi{v}+k_{0}^{2}\wi{v}=0\;\;\;\; \mbox{in}\;\;\Om_{0},\qquad
 \Delta \wi{u}+k_{1}^{2}\wi{u}=0\;\;\;\; \mbox{in}\;\;\Om
 \enn
with the transmission conditions
 \ben
 \wi{v}-\wi{u}=0,\;\quad
 \frac{\pa \wi{v}}{\pa\nu}=\la_0\frac{\pa\wi{u}}{\pa\nu}\qquad\mbox{on}\;\;S_{0}.
 \enn
It is clear that $\wi{v}$ satisfies the radiation condition (\ref{rc}).
By \cite[Lemma 3.40]{CK83}, $\wi{v}=0$ in $\Om_0$ and $\wi{u}=0$ in $\Om$.
Consequently, from the boundary conditions (\ref{htbc1}) and (\ref{htbc2})
we conclude that $\psi_{0}=\phi_{0}=0$ on $S_{0}$.

In a completely similar manner, we can also prove that $\psi_{1}=\phi_{1}=0$ on $S_{1}$.
Thus, we have established the injectivity of the operator $I+A$ and, by
the Riesz-Fredholm theory, $(I+A)^{-1}$ exists and is bounded in $X$.
The estimate (\ref{estimate}) follows from (\ref{matrix-form}).
The proof is thus complete.
\end{proof}

We now establish the following mixed reciprocity relation which is needed for
the inverse problem.

\begin{lemma}\label{Mrr} {\rm( Mixed reciprocity relation.)}
For the scattering of plane waves $u^{i}(\cdot,d)$ with $d\in S$ and
point-sources $\Phi(\cdot,z)$ from the obstacle $\Om_{2}$ we have
\ben
\Phi^{\infty}(\widehat{x},z)=\begin{cases}
      \ds\frac{1}{4\pi}u^{s}(z,-\widehat{x}), &\qquad z\in\Om_{0},\;\widehat{x}\in S,\cr
      \ds\frac{\la_{0}}{4\pi}v(z,-\widehat{x}),&\qquad z\in \Om_{1},\;\widehat{x}\in S.
\end{cases}
\enn
\end{lemma}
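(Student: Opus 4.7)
The strategy is to express $\Phi^{\infty}(\widehat{x},z)$ as a boundary integral on $S_0$ via Green's representation of the radiating scattered field $u^s(\cdot,z)$ in $\Om_0$, and then to rewrite that integral by repeated use of Green's second identity in the subdomains $\Om_R$, $\Om_1$, $\Om_2$ together with the transmission conditions on $S_0$ and $S_1$. Throughout, let $(u_1,v_1,w_1)$ denote the total fields of the plane-wave problem with direction $d=-\widehat{x}$, and $(u_2,v_2,w_2)$ those of the point-source problem with source at $z\in\Om_j$ ($j=0,1$).

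Inserting the far-field asymptotic $\Phi_0(x,y)\sim(4\pi|x|)^{-1}e^{ik_0|x|}e^{-ik_0\widehat{x}\cdot y}$ into Green's representation for $u^s(\cdot,z)$ in $\Om_0$ and recognising that $e^{-ik_0\widehat{x}\cdot y}=u^i(y,-\widehat{x})$ yields
\[
4\pi\Phi^{\infty}(\widehat{x},z)=\int_{S_0}\Bigl[u^s(y,z)\frac{\pa u^i(y,-\widehat{x})}{\pa\nu}-\frac{\pa u^s(y,z)}{\pa\nu}u^i(y,-\widehat{x})\Bigr]ds(y).
\]
Three intermediate identities then need to be established: (a) Green's identity applied to $(w_1,w_2)$ in $\Om_2$, where both solve $(\Delta+k_2^2 n)w=0$, combined with the $S_1$-transmission conditions, gives $\int_{S_1}[v_1\pa_\nu v_2-v_2\pa_\nu v_1]\,ds=0$; (b) Green's identity applied to $(v_1,v_2)$ in $\Om_1$, with a small ball $B(z,\vep)$ excised around $z$ when $z\in\Om_1$ so as to capture the $\Phi_1(\cdot,z)$-singularity of $v_2$, gives
\[
\int_{S_0}\bigl[v_1\pa_\nu v_2-v_2\pa_\nu v_1\bigr]ds=\begin{cases}0,&z\in\Om_0,\\ -v_1(z),&z\in\Om_1;\end{cases}
\]
(c) Green's identity on $\Om_R$ applied to the two radiating fields $u_1^s,u_2^s$, followed by letting $R\to\infty$ and invoking the Sommerfeld condition, gives $\int_{S_0}[u_1^s\pa_\nu u_2^s-u_2^s\pa_\nu u_1^s]\,ds=0$.

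Next I would use the $S_0$-transmission conditions to substitute $u^i(\cdot,-\widehat{x})=v_1-u_1^s$ and $\pa_\nu u^i(\cdot,-\widehat{x})=\la_0\pa_\nu v_1-\pa_\nu u_1^s$ on $S_0$, together with $u^s(\cdot,z)=v_2$ if $z\in\Om_1$ or $u^s(\cdot,z)=v_2-\Phi_0(\cdot,z)$ if $z\in\Om_0$ (plus the analogous relations for normal derivatives), and expand the integral for $4\pi\Phi^{\infty}$. The bilinear $(u_1^s,u_2^s)$ part cancels by (c), while the bilinear $(v_1,v_2)$ part reduces by (b) to $\la_0 v_1(z)$ (case $z\in\Om_1$) or to $0$ (case $z\in\Om_0$). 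The case $z\in\Om_1$ then directly yields $\Phi^{\infty}(\widehat{x},z)=\frac{\la_0}{4\pi}v(z,-\widehat{x})$. In the case $z\in\Om_0$ the surviving cross-terms involve $\Phi_0(\cdot,z)$ and $u^i(\cdot,-\widehat{x})$, both of which satisfy Helmholtz with wave number $k_0$ throughout $\Om_1\cup\Om_2$ without singularities; adding Green's identities on $\Om_1$ and $\Om_2$ for this pair annihilates their $S_0$-integral, so that the remaining expression is exactly Green's representation of the radiating $u^s(\cdot,-\widehat{x})$ at the interior point $z\in\Om_0$, and equals $u^s(z,-\widehat{x})$.

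The main obstacle is the bookkeeping: orienting $\nu$ correctly on both interior interfaces, combining the Green-identity contributions over the three domains, and handling the point-source singularity whose location differs between the two cases. In the case $z\in\Om_1$ an explicit $\vep\to 0$ excision is needed to produce the $-v_1(z)$ term, whereas in the case $z\in\Om_0$ the singularity is absorbed into the decomposition $u^s(\cdot,z)+\Phi_0(\cdot,z)=v(\cdot,z)$ on $S_0$, whose $\Phi_0(\cdot,z)$-contribution must then be eliminated through a separate Green's-identity argument that exploits the regularity of $\Phi_0(\cdot,z)$ in $\Om_1\cup\Om_2$.
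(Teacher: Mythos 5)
Your plan is correct and follows essentially the same route as the paper's proof: the far-field representation on $S_0$, the vanishing of the $(u_1^s,u_2^s)$ boundary integral via the radiation condition, the transmission conditions on $S_0$ and $S_1$, Green's second identities in $\Om_1$ and $\Om_2$, and the residue $v_1(z)$ from excising the point source when $z\in\Om_1$. The only (cosmetic) difference is organizational: the paper treats the case $z\in\Om_0$ by computing $\Phi^{\infty}(\widehat{x},z)$ and $\frac{1}{4\pi}u^{s}(z,-\widehat{x})$ as two parallel expressions pushed down to $S_1$ (each picking up $(k_1^2-k_0^2)\Phi_0 v$ volume terms and a $(1-\la_0)$ surface term) and then subtracting, whereas you isolate the $(v_1,v_2)$ reciprocity integral over $S_0$ once and evaluate it uniformly in both cases.
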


\begin{proof}
By Green's second theorem and the radiation condition (\ref{rc}) we get
 \be\label{M1}
\frac{1}{4\pi}\int_{S_{0}}\left(u^{s}(y,z)\frac{\pa u^{s}(y,-\widehat{x})}{\pa\nu(y)}
        -u^{s}(y,-\widehat{x})\frac{\pa u^{s}(y,z)}{\pa\nu(y)}\right)ds(y)=0
 \en
for $z\in\Om_{0}\cup\Om_{1}$ and $\widehat{x}\in S.$
By \cite[Theorem 2.5]{CK}, we have the representation
 \be\label{M2}
\Phi^{\infty}(\widehat{x},z)=\frac{1}{4\pi}\int_{S_{0}}\left(u^{s}_{+}(y;z)
     \frac{\pa e^{-ik_{0}\widehat{x}\cdot y}}{\pa\nu(y)}
     -e^{-ik_{0}\widehat{x}\cdot y}\frac{\pa u^{s}_{+}(y;z)}{\pa\nu(y)}\right)ds(y)
 \en
for $z\in\Om_{0}\cup\Om_{1}$ and $\widehat{x}\in S.$
Adding $(\ref{M1})$ to $(\ref{M2})$ gives
 \ben
 \Phi^{\infty}(\widehat{x},z)=\frac{1}{4\pi}\int_{S_{0}}
 \left(u^{s}(y,z)\frac{\pa u(y,-\widehat{x})}{\pa\nu(y)}
   -u(y,-\widehat{x})\frac{\pa u^{s}(y,z)}{\pa\nu(y)}\right)ds(y)
 \enn
for $z\in\Om_{0}\cup\Om_{1}$ and $\widehat{x}\in S.$

We first consider the case $z\in\Om_{0}$. Use the boundary condition on $S_{0}$
and Green's second theorem to deduce that for $z\in\Om_{0},\;\widehat{x}\in S$,
\be\label{M3}
\Phi^{\infty}(\widehat{x},z)&=&\frac{1}{4\pi}\int_{S_{0}}\left(u^{s}(y,z)
    \frac{\pa u(y,-\widehat{x})}{\pa\nu(y)}
    -u(y,-\widehat{x})\frac{\pa u^{s}(y,z)}{\pa\nu(y)}\right)ds(y)\cr
 &=&\la_{0}\frac{1}{4\pi}\int_{S_{1}}\left((v(y,z)-\Phi_{0}(y,z))
    \frac{\pa v(y,-\widehat{x})}{\pa\nu(y)}
    -v(y,-\widehat{x})\frac{\pa (v(y,z)-\Phi_{0}(y,z))}{\pa\nu(y)}\right)ds(y)\cr
 &&+\la_{0}\frac{1}{4\pi}\int_{\Om_{1}}\left((k_{1}^{2}-k_{0}^{2})
    \Phi_{0}(y,z)v(y,-\widehat{x})\right)dy\cr
 &&+(1-\la_{0})\frac{1}{4\pi}\int_{S_{0}}\left(v(y,-\widehat{x})
    \frac{\pa\Phi_{0}(y,z)}{\pa\nu(y)}\right)ds(y).
\en
Now Green's second theorem gives that for $z\in\Om_{0}$ and $\widehat{x}\in S$
\be\label{M4}
\frac{1}{4\pi}\int_{S_0}\left(u^{i}(y,-\widehat{x})\frac{\pa
\Phi_{0}(z,y)}{\pa\nu(y)}
   -\Phi_{0}(z,y)\frac{\pa u^{i}(y,-\widehat{x})}{\pa\nu(y)}\right)ds(y)=0.
\en
From Green's formula (see Theorem 2.4 in \cite{CK}) it follows that
\be\label{M5}
\frac{1}{4\pi}u^{s}(z,-\widehat{x})=\frac{1}{4\pi}\int_{S_{0}}\left(u^{s}(y,-\widehat{x})
     \frac{\pa\Phi_{0}(z,y)}{\pa\nu(y)}
     -\Phi_{0}(z,y)\frac{\pa u^s(y,-\widehat{x})}{\pa\nu(y)}\right)ds(y)
\en
for $z\in\Om_0$ and $\widehat{x}\in S.$ Adding (\ref{M4}) to the
equation (\ref{M5}) we deduce, with the help of the transmission
condition on $S_0$ and Green's second theorem, that for $z\in\Om_{0}$ and $\widehat{x}\in S,$
\be\label{M6}
\frac{1}{4\pi}u^{s}(z,-\widehat{x})&=&\frac{1}{4\pi}\int_{S_{0}}\left(u(y,-\widehat{x})
  \frac{\pa\Phi_{0}(z,y)}{\pa\nu(y)}-\Phi_{0}(z,y)
    \frac{\pa u(y,-\widehat{x})}{\pa\nu(y)}\right)ds(y)\cr
 &=&\la_{0}\frac{1}{4\pi}\int_{S_{1}}\left(v(y,-\widehat{x})\frac{\pa \Phi_{0}(z,y)}{\pa\nu(y)}
  -\Phi_{0}(z,y)\frac{\pa v(y,-\widehat{x})}{\pa\nu(y)}\right)ds(y)\cr
 &&+\la_{0}\frac{1}{4\pi}\int_{\Om_{1}}\left((k_{1}^{2}-k_{0}^{2})
   \Phi_{0}(y,z)v(y,-\widehat{x})\right)dy\cr
 &&+(1-\la_{0})\frac{1}{4\pi}\int_{S_{0}}\left(v(y,-\widehat{x})
   \frac{\pa \Phi_{0}(y,z)}{\pa\nu(y)}\right)ds(y)
\en
By (\ref{M3}) and (\ref{M6}) together with the transmission condition on $S_{1}$
and the equation (\ref{HE2}) we deduce that
 \ben
\Phi^{\infty}(\widehat{x},z)-\frac{1}{4\pi}u^{s}(z,-\widehat{x})
&=&\la_{0}\frac{1}{4\pi}\int_{S_{1}}\left(v(y,z)\frac{\pa v(y,-\widehat{x})}{\pa\nu(y)}
    -v(y,-\widehat{x})\frac{\pa v(y,z)}{\pa\nu(y)}\right)ds(y)\cr
&=&\la_{0}\la_{1}\frac{1}{4\pi}\int_{S_{1}}\left(w(y,z)\frac{\pa w(y,-\widehat{x})}{\pa\nu(y)}
    -w(y,-\widehat{x})\frac{\pa w(y,z)}{\pa\nu(y)}\right)ds(y)\cr
&=&\la_{0}\la_{1}\frac{1}{4\pi}\int_{\Om_{2}}(w(y,z)\Delta w(y,-\widehat{x})
    -w(y,-\widehat{x})\Delta w(y,z))ds(y)=0.
 \enn

We now consider the case $z\in\Om_1$. Using the transmission condition on $S_0$ and $S_{1}$, the
equations $(\ref{HE1})$ and $(\ref{HE2})$ and Green's second theorem, we obtain that
 \ben
\Phi^{\infty}(\widehat{x},z)&=&\frac{1}{4\pi}\int_{S_{0}}\left(u^{s}(y,z)
  \frac{\pa u(y,-\widehat{x})}{\pa\nu(y)}-u(y,-\widehat{x})
  \frac{\pa u^{s}(y,z)}{\pa\nu(y)}\right)ds(y)\cr
&=&\frac{\la_{0}}{4\pi}\int_{S_{0}}\left((v(y,z)+\Phi_{1}(y,z))
   \frac{\pa v(y,-\widehat{x})}{\pa\nu(y)}-v(y,-\widehat{x})
  \frac{\pa (v(y,z)+\Phi_{1}(y,z))}{\pa\nu(y)}\right)ds(y)\cr
&=&\frac{\la_{0}}{4\pi}\int_{S_{0}}\left((v(y,z)+\Phi_{1}(y,z))
   \frac{\pa v(y,-\widehat{x})}{\pa\nu(y)}-v(y,-\widehat{x})
   \frac{\pa (v(y,z)+\Phi_{1}(y,z))}{\pa\nu(y)}\right)ds(y)\cr
&=&\frac{\la_{0}}{4\pi}\int_{S_{0}}\left(\Phi_{1}(y,z)
   \frac{\pa v(y,-\widehat{x})}{\pa\nu(y)}-v(y,-\widehat{x})
   \frac{\pa \Phi_{1}(y,z)}{\pa\nu(y)}\right)ds(y)\cr
&&+\frac{\la_{0}}{4\pi}\int_{S_{1}}\left(v(y,z)
   \frac{\pa v(y,-\widehat{x})}{\pa\nu(y)}-v(y,-\widehat{x})
   \frac{\pa v(y,z)}{\pa\nu(y)}\right)ds(y)\cr
&=&\frac{\la_{0}}{4\pi}\int_{S_{0}}\left(\Phi_{1}(y,z)
   \frac{\pa v(y,-\widehat{x})}{\pa\nu(y)}-v(y,-\widehat{x})
   \frac{\pa \Phi_{1}(y,z)}{\pa\nu(y)}\right)ds(y)\cr
&&-\frac{\la_{0}}{4\pi}\int_{S_{1}}\left(\Phi_{1}(y,z)
   \frac{\pa v(y,-\widehat{x})}{\pa\nu(y)}-v(y,-\widehat{x})
   \frac{\pa \Phi_{1}(y,z)}{\pa\nu(y)}\right)ds(y)\cr
&=&\frac{\la_{0}}{4\pi}v(z,-\widehat{x}),
 \enn
where use has been made of Green's formula (see Theorem 2.1 in \cite{CK}).
\end{proof}

Denote by $D$ any of $\ov{\Om_{R}},\;\ov{\Om_{1}},\;\ov{\Om_{2}},$ $S_{0},$ $S_{1}$.
Let $x^{\ast}\in D$ be an arbitrarily fixed point and let us introduce the space $C_{0}(D)$
which consists of all continuous functions $h\in C(D\ba\{x^{\ast}\})$ with the property that
 \ben
\lim_{x\rightarrow x^{\ast}}|(x-x^{\ast})h(x)|
 \enn
exists. It can be easily verify that $C_{0}(D)$ is a Banach space equipped with
the weighted maximum norm
 \ben
 \|h\|_{0,\;D}:=\sup_{x\neq x^{\ast},\; x\in D}|(x-x^{\ast})h(x)|.
 \enn

To prove the unique determination of $S_{0}$ in the inverse problem in the next section,
we need to consider the the boundary value problem $(\ref{HE0})-(\ref{rc})$ with
$\ds f=-\Phi_0(\cdot,z_0)|_{S_0},\;g=-{\pa\Phi_0(\cdot,z_0)}/{\pa\nu}|_{S_0},\;p=0,\;q=0,$
where $z_{0}\in\Om_{0},$ and investigate the behavior of the solution $v$ on some part of $S_0$.

\begin{lemma}\label{vapriori}
For given functions $f\in C^{1,\al}(S_{0})$ and $g\in C^{0,\al}(S_{0})$,
assume that $u\in C^{2}(\Om_{0})\cap C^{1,\al}(\ov{\Om_{0}})$,
$v\in C^{2}(\Om_{1})\cap C^{1,\al}(\ov{\Om_{1}})$ and
$w\in C^{2}(\Om_{2})\cap C^{1,\al}(\ov{\Om_{2}})$
are the solution of the problem $(\ref{HE0})-(\ref{rc}).$
Let $x^{\ast}\in S_{0}$ and let $B_{1},$ $B_{2}$ $(B_{2}\cap \ov{\Om_{2}}=\emptyset)$
be two small balls with center $x^{\ast}$
and radii $r_{1},\;r_{2}$, respectively, satisfying that $r_1<r_2$.
Then there exists a constant $C>0$ such that
 \be
 \|v\|_{\infty,\; S_{0}\ba B_{2}}+\left\|\frac{\pa v}{\pa\nu}\right\|_{\infty,\; S_{0}\ba B_{2}}
 \leq C(\|f\|_{0,\;S_{0}}+\|g\|_{0,\;S_{0}}+\|f\|_{1,\;\al,\;S_{0}\ba B_{1}}
   +\|g\|_{0,\;\al,\;S_{0}\ba B_{1}})
 \en
\end{lemma}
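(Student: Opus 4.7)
I would decompose the data by a smooth cut-off at $x^{\ast}$ into a singular piece (controlled by the weighted $C_{0}$ norm) and a regular piece (controlled by the H\"older norm), and handle each separately, exploiting the geometric separation between the target set $S_{0}\ba B_{2}$ and the singular support. Choose $\chi\in C_{c}^{\infty}(\R^{3})$ with $\chi\equiv 1$ on $\ov{B_{1}}$ and $\mathrm{supp}\,\chi\Subset B_{2}$, and set $\delta:=\mathrm{dist}(\mathrm{supp}\,\chi,\R^{3}\ba B_{2})>0$. Split $f=f_{1}+f_{2}$, $g=g_{1}+g_{2}$ with $f_{1}:=\chi f$, $f_{2}:=(1-\chi)f$ (and analogously for $g$). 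By linearity of $(\ref{HE0})$--$(\ref{rc})$ with $p=q=0$, the solution splits as $(u,v,w)=(u_{1},v_{1},w_{1})+(u_{2},v_{2},w_{2})$, where $(u_{j},v_{j},w_{j})$ solves the transmission problem with data $(f_{j},g_{j},0,0)$.

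\textbf{Regular part.} Since $f_{2},g_{2}$ vanish on $\ov{B_{1}}\cap S_{0}$ and $\chi$ is smooth, one has
\begin{equation*}
\|f_{2}\|_{1,\al,S_{0}}+\|g_{2}\|_{0,\al,S_{0}}\le C\bigl(\|f\|_{1,\al,S_{0}\ba B_{1}}+\|g\|_{0,\al,S_{0}\ba B_{1}}\bigr).
\end{equation*}
Theorem \ref{well-posedness} then yields $\|v_{2}\|_{1,\al,\ov{\Om_{1}}}\le C(\|f\|_{1,\al,S_{0}\ba B_{1}}+\|g\|_{0,\al,S_{0}\ba B_{1}})$, and continuity of the trace on $S_{0}\subset\pa\Om_{1}$ bounds $v_{2}$ and $\pa v_{2}/\pa\nu$ in $L^{\infty}(S_{0})$ by the same right-hand side.

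\textbf{Singular part.} For the singular data, which satisfy the pointwise bounds $|f_{1}(y)|\le\|f\|_{0,S_{0}}/|y-x^{\ast}|$ and $|g_{1}(y)|\le\|g\|_{0,S_{0}}/|y-x^{\ast}|$ on $\mathrm{supp}\,\chi\cap S_{0}$, I would construct a Green's matrix for the transmission system with source at a target point $x_{0}\in S_{0}\ba B_{2}$: namely, solve the auxiliary transmission problem of type $(\ref{HE0})$--$(\ref{rc})$ generated by $\Phi_{j}(\cdot,x_{0})$ via Theorem \ref{well-posedness} and Remark \ref{r2}, producing a triple $(u_{\star}(\cdot;x_{0}),v_{\star}(\cdot;x_{0}),w_{\star}(\cdot;x_{0}))$ that is smooth off the diagonal. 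Applying Green's second theorem in $\Om_{0},\Om_{1},\Om_{2}$ to $(u_{1},v_{1},w_{1})$ against this Green's matrix and invoking the transmission conditions on $S_{0}$ and $S_{1}$ (with $p=q=0$ on $S_{1}$) gives a boundary representation
\begin{equation*}
v_{1}(x_{0})=\int_{S_{0}}\bigl(K_{1}(x_{0},y)f_{1}(y)+K_{2}(x_{0},y)g_{1}(y)\bigr)\,ds(y),
\end{equation*}
with an analogous formula for $\pa v_{1}(x_{0})/\pa\nu$, and kernels $K_{1},K_{2}$ smooth and uniformly bounded for $(x_{0},y)\in(S_{0}\ba B_{2})\times(\mathrm{supp}\,\chi\cap S_{0})$ because $|x_{0}-y|\ge\delta$. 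Combining with the weighted pointwise bounds on $f_{1},g_{1}$ and the finiteness of $\int_{S_{0}\cap\mathrm{supp}\,\chi}|y-x^{\ast}|^{-1}\,ds(y)$ on the $C^{2}$ $2$-surface $S_{0}$ (an $r^{-1}$ singularity is integrable against surface measure), this yields
\begin{equation*}
|v_{1}(x_{0})|+\Bigl|\tfrac{\pa v_{1}(x_{0})}{\pa\nu}\Bigr|\le C\int_{S_{0}\cap\mathrm{supp}\,\chi}\bigl(|f_{1}|+|g_{1}|\bigr)\,ds\le C\bigl(\|f\|_{0,S_{0}}+\|g\|_{0,S_{0}}\bigr).
\end{equation*}
Summing this with the regular-part bound gives the claimed estimate.

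\textbf{Main obstacle.} The essential technical work lies in the construction and off-diagonal smoothness of the Green's matrix for the full transmission system; equivalently, one must convert the $C^{1,\al}/C^{0,\al}$-type bound of Theorem \ref{well-posedness} (which would blow up if applied naively to $f_{1},g_{1}$) into an $L^{\infty}$-bound on $v_{1}(x_{0})$ expressed as an $L^{1}$-type functional of the data against a smooth kernel. Once this kernel estimate $|K_{j}(x_{0},y)|\le C$ for $|x_{0}-y|\ge\delta$ is in hand, everything else --- the integrability of $|y-x^{\ast}|^{-1}$ against $2$-dimensional surface measure and the assembly with the regular part --- is routine.
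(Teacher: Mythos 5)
Your overall architecture (cut the data off at $x^{\ast}$, dispose of the regular part with Theorem \ref{well-posedness}, and represent the singular part against an off-diagonal-bounded kernel) is reasonable, and the final step — integrability of $|y-x^{\ast}|^{-1}$ against surface measure — is fine. But the step you label the ``main obstacle'' is not a technical afterthought: it is the entire content of the lemma, and your proposed route to it does not go through as stated. You place the pole of the Green's matrix at $x_{0}\in S_{0}\setminus B_{2}$, i.e.\ \emph{on} the interface, and invoke Theorem \ref{well-posedness} and Remark \ref{r2}; those apply only to point sources located in the open sets $\Om_{0}$ or $\Om_{1}$. With the pole on $S_{0}$ the transmission data $\Phi_{j}(\cdot,x_{0})|_{S_{0}}$ are not in $C^{1,\al}(S_{0})$, so the well-posedness theorem does not produce the auxiliary triple $(u_{\star},v_{\star},w_{\star})$. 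If you instead put the pole at interior points of $\Om_{1}$ and let them approach $S_{0}\setminus B_{2}$, the $C^{1,\al}$ bound of Theorem \ref{well-posedness} on the auxiliary solution degenerates, and proving that the kernels $K_{1},K_{2}$ nevertheless stay bounded at points $y$ a fixed distance $\delta$ from the pole is itself an a priori estimate of exactly the type the lemma asserts (with source and observation exchanged) — so the argument is circular unless you supply an independent proof of that uniform bound. The normal-derivative half of the estimate compounds the difficulty, since differentiating the representation at the boundary demands additional regularity of the Green's matrix up to $S_{0}$.

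The paper closes precisely this gap by a different mechanism: it returns to the boundary integral system $(I+A)U=R$ from the proof of Theorem \ref{well-posedness}, but posed in the weighted space $C_{0}$ of functions allowed to blow up like $|x-x^{\ast}|^{-1}$, and shows by a double application of the Fredholm alternative (to the dual systems $\langle X,C_{0}\rangle$ and $\langle C_{0},C_{0}\rangle$) that $(I+A)^{-1}$ is bounded on $C_{0}$. That single functional-analytic step is what replaces your unproven kernel bound and immediately yields $\|v\|_{\infty,S_{0}\setminus B_{2}}\le C(\|f\|_{0,S_{0}}+\|g\|_{0,S_{0}})$. The paper then uses two cutoffs $\rho_{1},\rho_{2}$ and the off-diagonal smoothing of the layer-potential kernels to bootstrap to a H\"older estimate of the densities on $S_{0}\setminus B_{3}$, which is what makes the jump-relation formula for $\pa v/\pa\nu$ estimable on $S_{0}\setminus B_{2}$. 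If you wish to keep your duality/Green's-matrix viewpoint, you would still have to prove the weighted-space invertibility (or its adjoint formulation), so nothing is gained over the paper's route.
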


\begin{proof}
We consider the system (\ref{matrix-form}) of integral equations derived from
the boundary value problem $(\ref{HE0})-(\ref{rc})$.
In addition to the space $X$, we also consider the weighted spaces
$C_{0}:=C_{0}(\ov{\Om_{R}})\times C_{0}(\ov{\Om_{1}})\times C^{1,\;\al}(\ov{\Om_{2}})\times
C_{0}(S_{0})\times C_{0}(S_{0})\times C^{1,\al}(S_{1})\times C^{0,\al}(S_{1})$.
The matrix operator $A$ is also compact in $C_0$ since all entries of $A$ are
compact (see \cite{KK93,KP}). From the proof of Theorem \ref{well-posedness},
we know that the operator $I+A$ has a trivial null space in $X$. Therefore,
by the Fredholm alternative applied to the dual system $\langle X,C_{0}\rangle$
with the $L^{2}$ bilinear form, the adjoint operator $I+A^\prime$
has a trivial null space in $C_{0}$. By the Fredholm alternative again, but now
applied to the dual system $\langle C_{0}, C_{0}\rangle$ with the $L^{2}$ bilinear form,
the operator $I+A$ also has a trivial null space in $C_{0}$.
Hence, by the Riesz-Fredholm theory, the system (\ref{matrix-form}) is also uniquely
solvable in $C_{0}$, and the solution depends continuously on the right-hand side:
 \be\no
 \|U\|_{0}&:=&\|u\|_{0,\;\ov{\Om}_{R}}+\|v\|_{0,\;\ov{\Om}_{1}}+\|w\|_{1,\;\al,\;\ov{\Om}_{2}}
 +\|\psi_{0}\|_{0,\;S_{0}}+\|\phi_{0}\|_{0,\;S_{0}}
 +\|\psi_{1}\|_{1,\;\al,\;S_{1}}+\|\phi_{1}\|_{0,\;\al,\;S_{1}}\\ \label{C0es}
 &\leq& C(\|f\|_{0,\;S_{0}}+\|g\|_{0,\;S_{0}}).
 \en
In particular, this implies that
 \be
 \label{ves}\|v\|_{\infty,\;S_{0}\ba B_{2}}\leq C(\|f\|_{0,\;S_{0}}+\|g\|_{0,\;S_{0}}).
 \en
Before proceeding to estimate ${\pa v}/{\pa\nu}$ we establish the following estimate
in the spaces of H\"{o}lder continuous functions for $(u,v,w,\psi_0,\phi_0,\psi_1,\phi_1)$:
 \be\no
 \|U\|_{1,\;\al,\;3}&:=&\|u\|_{0,\;\al,\;\ov{\Om}_R\ba B_3}+\|v\|_{0,\;\al,\;\ov{\Om}_1\ba B_3}
       +\|w\|_{1,\;\al,\;\ov{\Om}_2}\\ \no
 &&+\|\psi_0\|_{1,\;\al,\;S_0\ba B_3}+\|\phi_0\|_{0,\;\al,\;S_0\ba B_3}
   +\|\psi_{1}\|_{1,\;\al,\;S_{1}}+\|\phi_{1}\|_{0,\;\al,\;S_{1}}\\ \label{7-tuple}
 &\leq& C(\|f\|_{0,\;S_{0}}+\|g\|_{0,\;S_{0}}
   +\|f\|_{1,\;\al,\;S_{0}\ba B_{1}}+\|g\|_{0,\;\al,\;S_{0}\ba B_{1}})
 \en
where $B_{3}$ is a ball of radius $r_{3}$ and centered at $x^*$ with $r_{1}<r_{3}<r_{2}$.

We now choose a function $\rho_{1}\in C^{2}(S_{0})$ such that $\rho_1(x)=0$ for $x\in S_0\ba B_2$
and $\rho_{1}(x)=1$ in the neighborhood of $B_{3}$.
We also choose another function $\rho_{2}\in C^{2}(S_{0})$ such that $\rho_{2}(x)=1$
for $x\in S_{0}\ba B_{3}$ and $\rho_{2}(x)=0$ in the neighborhood of $B_{1}$.

Split $U$ up in the form
\ben
 U=\left (\begin{matrix}\rho_{1}u\\ \rho_{1}v\\ w\\ \rho_{1}\psi_{0}\\
                     \rho_{1}\phi_{0}\\ \psi_{1}\\ \phi_{1}\end{matrix} \right )
   +\left (\begin{matrix}(1-\rho_{1})u\\ (1-\rho_{1})v\\ w\\ (1-\rho_{1})\psi_{0}\\
                     (1-\rho_{1})\phi_{0}\\ \psi_{1}\\ \phi_{1}\end{matrix} \right )
   :=U_{\rho_{1}}+U_{1-\rho_{1}}
\enn
and for a matrix $W$ use $W_{\rho}$ to denote the same matrix but with its first, second,
fourth and fifth rows multiplied by $\rho(x)$. Then it follows from (\ref{matrix-form}) that
\be\label{rhoU}
 U_{\rho_{2}}=R_{\rho_{2}}-A_{\rho_{2}}U_{\rho_{1}}-A_{\rho_{2}}U_{1-\rho_{1}}.
\en
The mapping operator $U\rightarrow A_{\rho_{2}}U_{\rho_{1}}$ is bounded from $C_0$ into $X$ since
its kernel vanishes in a neighborhood of the diagonal $x=y$. Furthermore, we apply Theorems
2.30 and 2.31 in \cite{CK83} and the analogous results for
$T_{0,1}, K^{'}_{0,1}, \wi{T}_{1,1}, \wi{K}^{'}_{1,1}$ to obtain that
 \ben
 \|A_{\rho_{2}}U_{1-\rho_{1}}\|_{0,\;\al}\leq C\|AU_{1-\rho_{1}}\|_{0,\;\al}
  \leq C\|U_{1-\rho_{1}}\|_{\infty}\leq C\|U\|_{0},
 \enn
where the norms $\|U\|_{0,\;\al}$ and $\|U\|_{\infty}$ are defined as follows:
the first, second, fourth and fifth components of $U$ are defined by the corresponding norms
and its third, sixth and seventh components are equipped with the
$C^{1,\;\al}(\ov{\Om_{2}}),\; C^{1,\;\al}(S_{1})$ and $C^{0,\;\al}(S_{1})$ norms, respectively.
From (\ref{rhoU}) and (\ref{C0es}) it is derived that
 \be\no
 \|U\|_{0,\;\al,\;3}&:=&\|u\|_{0,\;\al,\;\ov{\Om}_R\ba B_3}+\|v\|_{0,\;\al,\;\ov{\Om}_1\ba B_3}
   +\|w\|_{1,\;\al,\;\ov{\Om}_2}\\ \no
 &&+\|\psi_{0}\|_{0,\;\al,\;S_{0}\ba B_{3}}+\|\phi_{0}\|_{0,\;\al,\;S_{0}\ba B_{3}}
   +\|\psi_{1}\|_{1,\;\al,\;S_{1}}+\|\phi_{1}\|_{0,\;\al,\;S_{1}}\\ \no
 &\leq& C\|U_{\rho_{2}}\|_{0,\;\al}\leq C(\|R_{\rho_{2}}\|_{0,\;\al}+\|U\|_{0})\\ \label{U0al3}
 &\leq& C(\|f\|_{0,\;\al,\;S_{0}\ba B_{1}}+\|g\|_{0,\;\al,\;S_{0}\ba B_{1}}
   +\|f\|_{0,\;S_{0}}+\|g\|_{0,\;S_{0}}).
 \en

It remains to estimate $\|\psi_{0}\|_{1,\;\al,\;S_{0}\ba B_{3}}$.
Multiplying (\ref{psi0}) by $\rho_{2}(x)$ we obtain, on using (\ref{rhoU}) and noting
the fact that the integral operators mapping $C^{0,\al}$-functions
into $C^{1,\al}$-functions are bounded, that
 \be
 \no\|\psi_{0}\|_{1,\;\al,\;S_{0}\ba B_{3}}&\leq&\|\rho_{2}\psi_{0}\|_{1,\;\al,\;S_{0}}\\
 \no&\leq& C(\|\rho_{2}(\la_0K_{0,0}-K_{0,1})\psi_{0}\|_{1,\al}
  +\|\rho_{2}(S_{0,0}-S_{0,1})\phi_{0}\|_{1,\al}\\
 \no&&+\|\rho_{2}\wi{K}_{1,1}\psi_{1}\|_{1,\al}
  +\|\rho_{2}\wi{S}_{1,1}\phi_{1}\|_{1,\al}+\|\rho_{2}f\|_{1,\al})\\
 \no&\leq& C(\|U\|_{0}+\|(1-\rho_{1})U\|_{0,\;\al}+\|\rho_{2}f\|_{1,\al})\\
 \label{psi01al}&\leq& C(\|U\|_{0}+\|U\|_{0,\;\al,\;3}+\|f\|_{1,\;\al,\;S_{0}\ba B_{1}})
\en
where we have used the fact that $\psi_0=\rho_1\psi_0+(1-\rho_1)\psi_0$ and
$\phi_0=\rho_1\phi_0+(1-\rho_1)\phi_0$ in order to get the third inequality.
Combining (\ref{C0es}) and (\ref{U0al3})-(\ref{psi01al}) yields the desired estimate (\ref{7-tuple}).

We now estimate $\ds\left\|\frac{\pa v}{\pa\nu}\right\|_{0,\;\al,\;S_{0}\ba B_{2}}$.
From (\ref{v}) and the jump relation we have that on $S_0,$
\be\label{pavS0}
\frac{\pa v}{\pa\nu}=\frac{1}{2}\phi_{0}+T_{0,1}\psi_{0}+K^{'}_{0,1}\phi_{0}
   +\la_{1}\wi{T}_{1,1}\psi_{1}+\wi{K}^{'}_{1,1}\phi_{1}.
\en
Writing $\psi_{0}=\rho_{1}\psi_{0}+(1-\rho_{1})\psi_{0}$ and
$\phi_{0}=\rho_{1}\phi_{0}+(1-\rho_{1})\phi_{0}$, it follows from (\ref{pavS0}) that
\ben
\left\|\frac{\pa v}{\pa\nu}\right\|_{0,\;\al,\;S_{0}\ba B_{2}}
&\leq& \left\|(1-\rho_{1})\frac{\pa v}{\pa\nu}\right\|_{0,\;\al,\;S_{0}}\\
&\leq& C(\|U\|_0+\|(1-\rho_1)\psi_0\|_{1,\;\al,\;S_0}+\|(1-\rho_1)\phi_0\|_{0,\;\al,\;S_0}\\
&&     +\|\psi_{1}\|_{1,\;\al,\;S_{1}}+\|\phi_{1}\|_{0,\;\al,\;S_{1}})\\
&\leq& C(\|U\|_{0}+\|U\|_{1,\;\al,\;3}).
\enn
Combining this with (\ref{C0es}) and (\ref{7-tuple}) gives
\be\label{paves}
\left\|\frac{\pa v}{\pa\nu}\right\|_{0,\;\al,\;S_{0}\ba B_{2}}
\leq C(\|f\|_{C_{0}(S_{0})}+\|g\|_{C_{0}(S_{0})}
 +\|f\|_{1,\al,S_{0}\ba B_{1}}+\|g\|_{0,\al,S_{0}\ba B_{1}}).
\en
This completes the proof.
\end{proof}

To prove the unique determination of $S_{1}$, we consider the boundary value problem
$(\ref{HE0})-(\ref{rc})$ in the case when the incident field is the point source
$\Phi_1(\cdot,z_1)$ with $z_1\in\Om_1$, that is,
$\ds f=u^i|_{S_0},\;g={\pa u^i}/{\pa\nu}|_{S_0},\;p=-u^i|_{S_1},\;q=-{\pa u^i}/{\pa\nu}|_{S_1}$.
Arguing similarly as in the proof of Lemma \ref{vapriori}, we can obtain the following
a priori estimate of the solution $w$ on some part of $S_1$.

\begin{lemma}\label{wapriori}
Given four functions $f\in C^{1,\al}(S_{0})$, $g\in C^{0,\al}(S_{0})$,
$p\in C^{1,\al}(S_{1})$ and $q\in C^{0,\al}(S_{1})$,
let $u\in C^{2}(\Om_{0})\cap C^{1,\al}(\ov{\Om}_0)$,
$v\in C^{2}(\Om_{1})\cap C^{1,\al}(\ov{\Om}_1)$ and
$w\in C^{2}(\Om_{2})\cap C^{1,\al}(\ov{\Om}_2)$
be the solution of the problem $(\ref{HE0})-(\ref{rc}).$
Let $x^{\ast}\in S_{1}$ and let $B_{1},$ $B_{2}$ $(B_2\cap\ov{\Om}_0=\emptyset)$
be two small balls with center $x^{\ast}$
and radii $r_{1},\;r_{2}$ ($r_1<r_2$), respectively.
Then there exists a constant $C>0$ such that
 \ben
 \|w\|_{\infty,\; S_{1}\ba B_{2}}+\left\|\frac{\pa w}{\pa\nu}\right\|_{\infty,\; S_{1}\ba B_{2}}
 &\leq& C(\|f\|_{1,\;\al,\;S_{0}}+\|g\|_{0,\;\al,\;S_{0}}\\
 &&+\|p\|_{0,\;S_{1}}+\|q\|_{0,\;S_{1}}+\|p\|_{1,\;\al,\;S_{1}\ba B_{1}}
   +\|q\|_{0,\;\al,\;S_{1}\ba B_{1}}).
 \enn
\end{lemma}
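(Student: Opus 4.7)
The plan is to adapt the argument of Lemma \ref{vapriori}, interchanging the roles of $S_0$ and $S_1$ (and correspondingly those of $u$ and $w$) while keeping the matrix system (\ref{matrix-form}) as the backbone. With $R = (0,0,0,\la f, -\la g, \mu p, -\mu q)^T$, I would analyze (\ref{matrix-form}) in the weighted product space
\ben
C_0 := C^{1,\al}(\ov{\Om_R})\times C_0(\ov{\Om_1})\times C_0(\ov{\Om_2})\times C^{1,\al}(S_0)\times C^{0,\al}(S_0)\times C_0(S_1)\times C_0(S_1),
\enn
where the weight $|x - x^{\ast}|$ is placed precisely on those factors whose underlying region reaches up to the singular point $x^{\ast}\in S_1$ (namely $\Om_1$, $\Om_2$, and $S_1$); the remaining factors on $\ov{\Om_R}$ and $S_0$ retain their full H\"{o}lder norms, which is legitimate because $B_2\cap\ov{\Om_0}=\emptyset$ forces $\ov{\Om_R}$ and $S_0$ to lie at a positive distance from $x^{\ast}$.

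Since every entry of the matrix operator $A$ is compact on the relevant pair of factors in $C_0$ (the argument already used in Theorem \ref{well-posedness}, together with the standard mapping properties of layer and volume potentials on $C_0$-type spaces from \cite{KK93,KP}), the operator $A$ is compact on $C_0$. Applying the Fredholm alternative twice, first to the dual pairing $\langle X,C_0\rangle$ (the operator $I+A$ has trivial kernel in $X$ by Theorem \ref{uni.direct}, so $I+A^\prime$ has trivial kernel in $C_0$), and then to $\langle C_0,C_0\rangle$, yields invertibility of $I+A$ on $C_0$ together with the bound
\ben
\|U\|_0 \leq C(\|f\|_{1,\al,S_0} + \|g\|_{0,\al,S_0} + \|p\|_{0,S_1} + \|q\|_{0,S_1}),
\enn
which immediately gives the $L^{\infty}$ estimate of $w$ on $S_1\ba B_2$.

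To obtain the bound on $\pa w/\pa\nu$, I would next recover H\"{o}lder regularity of $U$ away from $x^{\ast}$ by the cutoff argument of Lemma \ref{vapriori}: choose $\rho_1,\rho_2\in C^2(S_1)$ with $\rho_1\equiv 1$ near $B_3$ and ${\rm supp}\,\rho_1\subset B_2$, and $\rho_2\equiv 1$ on $S_1\ba B_3$ with $\rho_2\equiv 0$ near $B_1$, where $r_1<r_3<r_2$. Splitting $U=U_{\rho_1}+U_{1-\rho_1}$ and multiplying the relevant rows of (\ref{matrix-form}) by $\rho_2$, the term $A_{\rho_2}U_{\rho_1}$ has kernel vanishing near the diagonal and is bounded in H\"{o}lder norm by $\|U\|_0$, while $A_{\rho_2}U_{1-\rho_1}$ is controlled by $\|U\|_0$ via the mapping properties from \cite[Thm.\ 2.30, 2.31]{CK83}. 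This produces the analogue of (\ref{7-tuple}) with right-hand side $\|f\|_{1,\al,S_0}+\|g\|_{0,\al,S_0}+\|p\|_{0,S_1}+\|q\|_{0,S_1}+\|p\|_{1,\al,S_1\ba B_1}+\|q\|_{0,\al,S_1\ba B_1}$. Finally, using the jump relation for (\ref{w}) from inside $\Om_2$,
\ben
\frac{\pa w}{\pa\nu} = \tfrac{1}{2}\phi_1 + T_{1,2}\psi_1 + K^{'}_{1,2}\phi_1 + V^{'}w \quad\mbox{on }S_1,
\enn
and decomposing $\psi_1=\rho_1\psi_1+(1-\rho_1)\psi_1$ and $\phi_1=\rho_1\phi_1+(1-\rho_1)\phi_1$ exactly as in (\ref{paves}), one estimates $\|(1-\rho_1)\pa w/\pa\nu\|_{0,\al,S_1}\leq C(\|U\|_0+\|U\|_{1,\al,3})$, and combining with the previous bounds yields the stated estimate.

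The main obstacle I anticipate is the rigorous justification that the volume operators $V$ and $V^{\prime}$ act compactly on the weighted space $C_0(\ov{\Om_2})$ when the singularity $x^{\ast}$ lies on $\pa\Om_2=S_1$ itself. In Lemma \ref{vapriori} this issue did not arise, because $B_2\cap\ov{\Om_2}=\emptyset$ put $x^{\ast}$ at positive distance from $\ov{\Om_2}$, so the factors on $\Om_2$ and $S_1$ could remain unweighted. Here the weight $|y-x^{\ast}|^{-1}$ enters inside the volume integral, and one must check that $|x-x^{\ast}|\int_{\Om_2}|x-y|^{-1}|y-x^{\ast}|^{-1}dy$ is uniformly bounded in $x\in\ov{\Om_2}$, which is a standard Newton-potential estimate in $\R^3$ since the singularity of $|x-y|^{-1}$ is integrable and absorbs the additional factor $|y-x^{\ast}|^{-1}$ locally.
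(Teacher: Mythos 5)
Your proposal is correct and follows exactly the route the paper intends: the paper gives no separate proof of Lemma \ref{wapriori} beyond the remark that one argues ``similarly as in the proof of Lemma \ref{vapriori}'', and your adaptation places the weight on precisely the right factors ($\ov{\Om}_1$, $\ov{\Om}_2$ and the two $S_1$ components) while keeping full H\"older norms on the factors bounded away from $x^{\ast}$, which reproduces the stated right-hand side. Your identification of the new technical point absent from Lemma \ref{vapriori} --- that $V$ and $V^{\prime}$ must now act on the weighted space $C_0(\ov{\Om}_2)$ with the singularity on $S_1$ --- is apt, and the Newton-potential bound you sketch (integrability of $|x-y|^{-1}|y-x^{\ast}|^{-1}$ over a bounded region of $\R^3$) is the correct way to dispose of it, consistent with the treatment in \cite{KP} that the paper cites for compactness of the entries of $A$.
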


\section{Unique determination of the interfaces $S_0$ and $S_1$}\label{sec3}
\setcounter{equation}{0}

Following the ideas of Kirsch and Kress \cite{KK93} for transmission problems in
a homogeneous medium, Kirsch and P\"{a}iv\"{a}rinta \cite{KP} for transmission problems
in an inhomogeneous medium and Liu and Zhang \cite{LZ2} for impenetrable obstacle scattering
in a piecewise homogeneous medium, we prove, in this section, that the interfaces
$S_{0}$ and $S_{1}$ can be uniquely determined by the far field pattern.
To do this, we need the following four lemmas, in which $\Om=\R^3\ba\ov{\Om}_0$
so that $\Om_{2}\subset\Om$ and $\wi{\Om}=\R^3\ba\ov{\wi{\Om}}_0$
for some domain $\wi{\Om}_0$ with interface $\wi{S}_0=\pa\wi{\Om}_0\cap\pa\wi{\Om}$
and domain $\wi{\Om}_2\subset\wi{\Om}$.

\begin{lemma}\label{lem3.1}
For $\Om_2\subset\Om,\;\wi{\Om}_2\subset\wi{\Om}$ let $G_{0}$ be the unbounded component
of $\R^3\setminus(\ov{\Om\cup\wi{\Om}})$ and let
$u^{\infty}(\hat{x},d)=\wi{u}^{\infty}(\hat{x},d)$ for all $\hat{x},\;d\in S$,
where $\wi{u}^{\infty}(\hat{x},d)$ is the far field pattern of the scattered field
$\wi{u}^s(x,d)$ corresponding to the obstacle $\wi{\Om}_2$, the interface $\wi{S}_0$
and the same incident plane wave $u^i(x,d)$.
For $z\in G_{0}$ let $(u^{s}(\cdot,z),v(\cdot,z),w(\cdot,z))$ be the unique solution of the problem
\be
\label{lHE0}\Delta u^{s}+k_0^2u^s=0&&\;{\rm in}\;\Om_0\setminus\{z\},\\
\label{lHE1}\Delta v+k_1^2v=0&&\;{\rm in}\;\Om_1,\\
\label{lHE2}\Delta w+k_2^2nw=0&&\;{\rm in}\;\Om_2,\\
\label{ltbc0}u^s-v=-\Phi_{0}(\cdot,z),\;\;\frac{\pa u^s}{\pa\nu}-\la_0\frac{\pa v}{\pa\nu}
            =-\frac{\pa\Phi_{0}(\cdot,z)}{\pa\nu}&&\;{\rm on}\;S_0,\\
\label{ltbc1}v-w=0,\;\;\frac{\pa v}{\pa\nu}=\la_1\frac{\pa w}{\pa\nu}
            &&\;{\rm on}\;S_1,\\
\label{lrc}\lim_{r\rightarrow\infty}r\left(\frac{\pa u^{s}}{\pa r}-ik_{0}u^{s}\right)=0.
\en
Assume that $(\wi{u}^{s}(\cdot,z),\wi{v}(\cdot,z),\wi{w}(\cdot,z))$ is the unique solution
of the problem $(\ref{lHE0})-(\ref{lrc})$ with $\Om_0,\Om_1,\Om_2,S_0,S_1,n$ replaced by
$\wi{\Om_0},\wi{\Om_1},\wi{\Om_2},\wi{S_0},\wi{S_1},\wi{n}$, respectively.
Then we have
\ben\label{3.22}
u^s(x,z)=\wi{u}^s(x,z),\qquad x\in\ov{G_{0}}.
\enn
\end{lemma}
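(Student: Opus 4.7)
The plan is to combine Rellich's lemma, applied twice, with the mixed reciprocity relation from Lemma \ref{Mrr}, which will bridge the plane-wave far field data and the point-source scattered fields. First, from the hypothesis $u^\infty(\hat x,d)=\wi u^\infty(\hat x,d)$ for all $\hat x,d\in S$, together with Rellich's lemma and the unique continuation principle applied in the connected open set $G_0$, I would conclude
\[
u^s(x,d)=\wi u^s(x,d),\qquad x\in G_0,\;d\in S,
\]
since both sides are radiating solutions of the Helmholtz equation with wave number $k_0$ in the common exterior region $\Om_0\cap\wi\Om_0\supset G_0$ and share the same far field pattern.

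Next, I would invoke Lemma \ref{Mrr}. Fix $z\in G_0$; since $G_0\subset\Om_0\cap\wi\Om_0$, the mixed reciprocity relation applies to both configurations and gives
\[
\Phi^\infty(\hat x,z)=\frac{1}{4\pi}u^s(z,-\hat x)=\frac{1}{4\pi}\wi u^s(z,-\hat x)=\wi\Phi^\infty(\hat x,z)
\]
for every $\hat x\in S$, with the middle equality coming from the previous step. Thus the far field patterns corresponding to the incident point source $\Phi_0(\cdot,z)$ coincide for the two obstacle/interface configurations, for every $z\in G_0$.

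To finish, I would apply Rellich's lemma a second time. The scattered fields $u^s(\cdot,z)$ and $\wi u^s(\cdot,z)$ defined by $(\ref{lHE0})$--$(\ref{lrc})$ and its tilded analogue are radiating solutions of $\Delta U+k_0^2U=0$ outside any fixed ball containing $\ov{\Om\cup\wi\Om}\cup\{z\}$ and, by the previous step, have the same far field pattern; Rellich's lemma forces them to agree outside such a ball, and unique continuation in the connected open set $G_0\setminus\{z\}$ then propagates the equality to all of $G_0$.

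The only delicate point, and hence the main obstacle, is bookkeeping: one has to verify that $G_0\subset\Om_0\cap\wi\Om_0$ so that Lemma \ref{Mrr} genuinely applies to both configurations at every $z\in G_0$, and that the common Green-function singularities of $u^s(\cdot,z)$ and $\wi u^s(\cdot,z)$ at $z$ are of the same type, so that the difference is regular there and the second Rellich step applies without modification.
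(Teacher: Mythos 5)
Your proposal is correct and follows essentially the same route as the paper: Rellich's lemma to identify $u^s(\cdot,d)$ and $\wi u^s(\cdot,d)$ on $G_0$, the mixed reciprocity relation of Lemma \ref{Mrr} (valid since $G_0\subset\Om_0\cap\wi\Om_0$) to transfer this to equality of the point-source far field patterns $\Phi^\infty(\cdot,z)=\wi\Phi^\infty(\cdot,z)$ for $z\in G_0$, and a second application of Rellich's lemma to conclude. The only cosmetic difference is your worry about "Green-function singularities" at $z$: the scattered fields $u^s(\cdot,z)$ and $\wi u^s(\cdot,z)$ are themselves regular at $z$ (the singularity sits in the incident field $\Phi_0(\cdot,z)$), so no cancellation is needed there.
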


\begin{remark}\label{r3.2} {\rm
By Theorem \ref{well-posedness}, the problem (\ref{lHE0})-(\ref{lrc}) has
a unique solution.
}
\end{remark}

\begin{proof}
By Rellich's lemma \cite{CK}, the assumption $u^{\infty}(\hat{x},d)
=\wi{u}^{\infty}(\hat{x},d)$ for all $\hat{x},\;d\in S$ implies that
 \ben
u^s(x,d)=\wi{u}^{s}(x,d),\qquad x\in G_{0},\;d\in S.
 \enn
Then, for the far field pattern corresponding to incident point-sources
we have by Lemma \ref{Mrr} that
 \ben
\Phi^{\infty}(d,z)=\wi{\Phi}^{\infty}(d,z),\qquad z\in G_{0},\;d\in S.
 \enn
Thus, Rellich's lemma \cite{CK} implies that
 \ben
u^{s}(x,z)=\wi{u}^{s}(x,z),\qquad x\in\ov{G_{0}}.
 \enn
\end{proof}

Arguing similarly as in the proof of Lemma \ref{lem3.1}, we have the
corresponding result for the point source in the bounded homogeneous medium.

\begin{lemma}\label{lem3.2}
For $\Om_2\subset\Om,\;\wi{\Om}_2\subset\Om$
let $G_1:=\Om\setminus(\ov{\Om_2\cup\wi{\Om_2}})$ and let
$u^{\infty}(\hat{x},d)=\wi{u}^{\infty}(\hat{x},d)$ for all $\hat{x},\;d\in S$
where $\wi{u}^{\infty}(\hat{x},d)$ is the far field pattern of the scattered field
$\wi{u}^s(x,d)$ corresponding to the obstacle $\wi{\Om}_2$, the interface $\wi{S}_1$
and the same incident plane wave $u^i(x,d)$.
For $z\in G_{1}$ let $(u^{s}(\cdot,z),v(\cdot,z),w(\cdot,z))$
be the unique solution of the problem
\be
\label{2HE0}\Delta u^{s}+k_0^2u^s=0&&\;{\rm in}\;\;\Om_0,\\
\label{2HE1}\Delta v+k_1^2v=0&&\;{\rm in}\;\;\Om_1\setminus\{z\},\\
\label{2HE2}\Delta w+k_2^2nw=0&&\;{\rm in}\;\;\Om_2,\\
\label{2tbc0}u^s-v=\Phi_{1}(x,z),\;\;\frac{\pa u^s}{\pa\nu}-\la_0\frac{\pa v}{\pa\nu}
            =\frac{\pa\Phi_{1}(x,z)}{\pa\nu}&&\;{\rm on}\;\;S_0,\\
\label{2tbc1}v-w=-\Phi_{1}(x,z),\;\;\frac{\pa v}{\pa\nu}-\la_1\frac{\pa w}{\pa\nu}
                =-\frac{\pa\Phi_{1}(x,z)}{\pa\nu}
            &&\;{\rm on}\;\;S_1,\\
\label{2rc}\lim_{r\rightarrow\infty}r\left(\frac{\pa u^{s}}{\pa r}-ik_{0}u^{s}\right)=0.
\en
Assume that $(\wi{u}^{s}(\cdot,z),\wi{v}(\cdot,z),\wi{w}(\cdot,z))$ is the unique
solution of the problem $(\ref{2HE0})-(\ref{2rc})$ with $\Om_1,\Om_2,S_1,n$
replaced by $\wi{\Om_1},\wi{\Om_2},\wi{S_1},\wi{n}$, respectively.
Then we have
\ben\label{3.23}
v(x,z)=v(x,z),\qquad x\in\ov{G_{1}}.
\enn
\end{lemma}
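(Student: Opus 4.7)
The plan is to mirror the proof of Lemma~\ref{lem3.1}, with the crucial difference that $G_1$ is bounded (it sits inside $\Om$), so Rellich's lemma alone cannot bridge the far field data and the values of the fields on $G_1$. An additional Holmgren step across $S_0$ will be needed in each of the two directions.

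First I would exploit that the far field patterns agree for every plane wave incidence. Since $\Om=\wi{\Om}$ (the outer interface $S_0$ is common to both problems), $u^s(\cdot,d)$ and $\wi{u}^s(\cdot,d)$ are radiating Helmholtz solutions in the same unbounded domain $\Om_0$ with identical far field, so Rellich's lemma and unique continuation yield $u^s(\cdot,d)=\wi{u}^s(\cdot,d)$ throughout $\Om_0$. Combined with the transmission conditions~(\ref{tbc0}) on $S_0$, this implies that $v-\wi{v}$ is a Helmholtz solution with wave number $k_1$ in $G_1$ having vanishing Cauchy data on $S_0$. Holmgren's uniqueness theorem then forces
\[
v(\cdot,d)=\wi{v}(\cdot,d)\quad\text{in }G_1,\qquad d\in S.
\]

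Next, the interior form of the mixed reciprocity relation (Lemma~\ref{Mrr}) will be applied. Since $G_1\subset\Om_1\cap\wi{\Om}_1$, for every $z\in G_1$ and $\hat{x}\in S$ one has
\[
\Phi^{\infty}(\hat{x},z)=\frac{\la_0}{4\pi}v(z,-\hat{x})=\frac{\la_0}{4\pi}\wi{v}(z,-\hat{x})=\wi{\Phi}^{\infty}(\hat{x},z).
\]
Fixing $z\in G_1$ and applying Rellich's lemma to the radiating fields $u^s(\cdot,z)$ and $\wi{u}^s(\cdot,z)$ in $\Om_0$ gives $u^s(\cdot,z)=\wi{u}^s(\cdot,z)$ throughout $\Om_0$; in particular their Cauchy data coincide on $S_0$.

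Finally, the Holmgren step is repeated for the point-source problem: the transmission conditions~(\ref{2tbc0}) on $S_0$ involve the same free datum $\Phi_1(\cdot,z)$ in both problems, so subtracting yields that $v(\cdot,z)$ and $\wi{v}(\cdot,z)$ share their Cauchy data on $S_0$. Their difference is a classical Helmholtz solution in $G_1$ because the $\Phi_1(\cdot,z)$-singularity at $z$ cancels, so Holmgren's theorem gives $v(\cdot,z)=\wi{v}(\cdot,z)$ in $G_1$, and continuity up to the boundary gives the claim on $\ov{G_1}$. The main technical point is to ensure that Holmgren's theorem does apply, i.e.\ that $G_1$ is reachable from $S_0$ along a path in $G_1$; this is automatic from the geometric setup, since $\Om_2,\wi{\Om}_2$ are bounded subsets of $\Om$ whose inner interfaces $S_1,\wi{S}_1$ lie strictly inside $\Om$.
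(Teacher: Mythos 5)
Your proposal is correct and follows essentially the same route as the paper, which proves Lemma~\ref{lem3.1} by Rellich's lemma, the mixed reciprocity relation, and Rellich again, and then simply states that Lemma~\ref{lem3.2} is obtained ``arguing similarly.'' The extra Holmgren/transmission steps across $S_0$ that you insert (in both directions) are exactly the right way to carry that argument over to the bounded region $G_1$, where Rellich alone does not reach.
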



\begin{lemma}\label{lem3.3}
Assume that $f\in L^{2}(\Om_{1}),p\in C(S_{0}),q\in C(S_{1})$, $g,\eta\in C(S_{0})$ with
$\eta\neq0$ and $\eta\leq0$ on $S_{0}$. Then the following problem has a unique solution
$v\in C^{2}(\Om_{1})\cap C(\ov{\Om_{1}})$ and $w\in C^{2}(\Om_{2})\cap C(\ov{\Om_{2}})$:
 \be
 \label{vOm1}\Delta v+k_{1}^{2}v=f&&\qquad {\rm in}\;\;\Om_{1},\\
 \label{wOm2}\Delta w+k_{2}^{2}nw=0&&\qquad {\rm in}\;\;\Om_{2},\\
 \label{vbg}\frac{\pa v}{\pa\nu}+i\eta v=g&&\qquad{\rm on}\;\;S_0,\\
 \label{vwtr}v-w=p,\;\;\;\frac{\pa v}{\pa\nu}-\la_{1}\frac{\pa w}{\pa\nu}=q&&\qquad{\rm on}\;\; S_{1}.
 \en
Furthermore, there exists a constant $C>0$ such that
\ben
\|v\|_{\infty,\;\ov{\Om}_1}
\leq C(\|f\|_{L^2(\Om_1)}+\|g\|_{\infty,\;S_0}+\|p\|_{\infty,\;S_1}+\|q\|_{\infty,\;S_1}).
\enn
\end{lemma}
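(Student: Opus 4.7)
The plan is to imitate the two-step pattern of Theorems \ref{uni.direct} and \ref{well-posedness}: first establish uniqueness via a Green's-identity energy argument adapted to the impedance condition on $S_0$, then obtain existence by reducing $(\ref{vOm1})$--$(\ref{vwtr})$ to a Fredholm system of integral equations and invoking Riesz--Fredholm theory together with the uniqueness result. The only genuine novelty compared to Theorem \ref{well-posedness} is the presence of the $L^2$ source $f$ in $\Om_1$ and the impedance boundary condition replacing one of the transmission conditions.

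For \emph{uniqueness}, assume $f=g=p=q=0$ and apply Green's first theorem in $\Om_1$ and in $\Om_2$, with $\nu$ oriented outward from $\Om_1$ on $S_0$ and outward from $\Om_2$ on $S_1$. Multiplying the identity for $w$ by $\la_1$ and using the transmission conditions $v=w,\;\pa v/\pa\nu=\la_1\pa w/\pa\nu$ on $S_1$, the $S_1$-boundary terms cancel and, after substituting the equations $(\ref{vOm1})$--$(\ref{wOm2})$ and the impedance relation $\pa v/\pa\nu=-i\eta v$ on $S_0$, one arrives at
\[
\int_{\Om_1}(|\nabla v|^2-k_1^2|v|^2)\,dx
+\la_1\int_{\Om_2}(|\nabla w|^2-k_2^2\ov{n}|w|^2)\,dx
= i\int_{S_0}\eta|v|^2\,ds.
\]
Taking imaginary parts and using $\eta\le 0$, $\eta\not\equiv 0$ and $\Im n\ge 0$ forces both $\int_{S_0}\eta|v|^2 ds=0$ and $\int_{\Om_2}\Im n|w|^2 dx=0$. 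By continuity of $\eta$, there is a non-empty relatively open subset $\Sigma\subset S_0$ on which $\eta<0$ and hence $v=0$; the impedance relation then also gives $\pa v/\pa\nu=0$ on $\Sigma$. Holmgren's uniqueness theorem applied to $\Delta v+k_1^2v=0$ yields $v\equiv 0$ in $\Om_1$, and the transmission conditions $(\ref{vwtr})$ together with the assumption that $k_2^2$ is not a Neumann eigenvalue of $\Delta+k_2^2 n$ in $\Om_2$ then force $w\equiv 0$.

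For \emph{existence} and the estimate, first split off the inhomogeneity by writing $v=\tilde v-V_1f$, where $V_1f(x):=\int_{\Om_1}\Phi_1(x,y)f(y)\,dy$. Since $1/|x-y|\in L^2_{\mathrm{loc}}(\R^3)$, Cauchy--Schwarz gives $\|V_1f\|_{\infty,\,\R^3}\le C\|f\|_{L^2(\Om_1)}$, and $V_1f$ satisfies $(\Delta+k_1^2)V_1f=-f$ in $\R^3$; thus $\tilde v$ solves a homogeneous Helmholtz equation in $\Om_1$ with modified impedance data on $S_0$ and modified transmission data on $S_1$ whose sup-norms are controlled by $\|f\|_{L^2(\Om_1)}+\|g\|_{\infty,S_0}+\|p\|_{\infty,S_1}+\|q\|_{\infty,S_1}$. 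Represent $\tilde v$ via combined layer potentials on $S_0$ and $S_1$ (with $k_1$), and $w$ via layer potentials on $S_1$ (with $k_2$) plus the volume potential $Vw$ as in $(\ref{w})$; inserting these ansätze into the impedance condition on $S_0$ and the two transmission conditions on $S_1$ and using the jump relations yields a system $(I+\mathcal A)\Psi=\mathcal F$ in the continuous densities. The operator $\mathcal A$ is compact on the appropriate product of $C(S_0)$, $C(S_1)$ and $C(\ov\Om_2)$ spaces, exactly as in the proof of Theorem \ref{well-posedness}. Any element of the null space of $I+\mathcal A$ produces, via the representation, a solution of the homogeneous boundary value problem, which by the first step vanishes identically; tracing this back through the jump relations (as in the final paragraphs of the proof of Theorem \ref{well-posedness}) forces the densities to vanish. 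The Riesz--Fredholm alternative then delivers a unique continuous solution $\Psi$ with $\|\Psi\|\le C\|\mathcal F\|$, and the claimed $L^\infty$-estimate for $v$ follows by combining this bound with $\|V_1f\|_\infty\le C\|f\|_{L^2}$.

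The main obstacle is the uniqueness step: one must carefully reduce the boundary information furnished by $\eta\le 0,\ \eta\not\equiv 0$ to genuine zero Cauchy data for $v$ on an open piece of $S_0$ so that Holmgren's theorem applies. Once uniqueness is in hand, the existence and estimate are routine, provided one is careful to track the $L^2$-norm of $f$ through the volume-potential estimate rather than a sup-norm.
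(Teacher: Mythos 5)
Your uniqueness argument is correct and is essentially the paper's: the Green's-identity computation, the observation that the imaginary part forces $\int_{S_0}\eta|v|^2ds=0$, the conclusion that $v$ (and then, via the impedance condition, $\pa v/\pa\nu$) vanishes on a relatively open piece of $S_0$ where $\eta<0$, Holmgren's theorem in $\Om_1$, and the Neumann-eigenvalue assumption to kill $w$ in $\Om_2$.

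The existence step, however, has a genuine gap at precisely the point the paper's proof is built to handle. You write $v=\tilde v-V_1f$ and claim the modified impedance and transmission data have sup-norms controlled by $\|f\|_{L^2(\Om_1)}+\cdots$. That is true for $V_1f$ itself, since $|x-y|^{-1}\in L^2_{\mathrm{loc}}(\R^3)$ and Cauchy--Schwarz applies, but it fails for the normal derivative: $|\nabla_x\Phi_1(x,y)|\sim|x-y|^{-2}$ is not locally square integrable in $\R^3$, so the same argument gives nothing, and the mapping property $V_1:L^2(\Om_1)\to H^2(\Om_1)$ only yields $\pa(V_1f)/\pa\nu\in H^{1/2}$ on the interfaces --- which, for a two-dimensional surface, does not embed into $C$ or $L^\infty$. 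Hence the right-hand side $\mathcal F$ of your system $(I+\mathcal A)\Psi=\mathcal F$ is not continuous, and the Riesz--Fredholm argument ``in the continuous densities'' cannot be run as stated. The paper circumvents exactly this by the Kirsch--P\"{a}iv\"{a}rinta device of splitting each density into a continuous part plus an $H^{1/2}$ part ($\phi_1+\phi_2$, $\psi_1+\psi_2$, $\chi_1+\chi_2$), so that the continuous data $g,p,q$ are matched by equations for the continuous densities while the $V^{\ast}f$-contributions (which live only in $H^{1/2}$) are matched by separate equations for the $H^{1/2}$ densities, the whole system being solved in the mixed product space $Y$. Without this splitting (or some equivalent way of coping with the mere $H^{1/2}$ regularity of $\pa(V^{\ast}f)/\pa\nu$), your existence proof and the final estimate with $\|f\|_{L^2(\Om_1)}$ on the right-hand side do not go through.
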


\begin{proof}
We first prove the uniqueness result, that is, $v=0$ in $\Om_{0}$, $w=0$ in $\Om_{1}$
if $f=0$ in $\Om_{0}$, $g=0$ on $S_{0}$, $p=q=0$ on $S_{1}$.
With the help of equations (\ref{vOm1})-(\ref{wOm2}) and
boundary conditions (\ref{vbg})-(\ref{vwtr}), we have
\ben
0&=&\int_{\Om_{1}}\left\{(\Delta v+k_{1}^{2}u)\ov{v}\right\}dx\\
 &=&\int_{\Om_{1}}(-|\nabla v|^{2}+k_{1}^{2}|v|^{2})dx
     +\int_{S_{0}}\ov{v}\frac{\pa v}{\pa\nu}ds
     -\int_{S_{1}}\ov{v}\frac{\pa v}{\pa\nu}ds\\
 &=&\int_{\Om_{1}}(-|\nabla v|^{2}+k_{1}^{2}|v|^{2})dx
     -i\int_{S_{0}}\eta|v|^{2}ds-\la_{1}\int_{S_{1}}\ov{w}\frac{\pa w}{\pa\nu}ds\\
 &=&\int_{\Om_{1}}(-|\nabla v|^{2}+k_{1}^{2}|v|^{2})dx
     -i\int_{S_{0}}\eta|v|^{2}ds-\la_{1}\int_{\Om_{2}}(-k_{2}^{2}n|w|^{2}+|\nabla w|^{2})dx.
\enn
Taking the imaginary part of the above equation gives that $v=0$ on some part $\G$
of $S_{0}$ since both $\eta\neq0$ and $\eta\leq0$ on $S_{0}$ and $\la_{1}$ is a
positive number. By the boundary condition (\ref{vbg}) it follows that
$v={\pa v}/{\pa\nu}=0$ on $\G$. Thus, by Holmgren's uniqueness theorem \cite{Kr},
$v=0$ in $\Om_1$. Using the transmission boundary conditions (\ref{vwtr})
and the assumption on $k_{2}^{2}$, we conclude that $w=0$ in $\Om_2.$

We now prove the existence of solutions using the integral equation method.
To this end, introduce the volume potential
\ben
 (V^{\ast}f)(x):=\int_{\Om_{1}}\Phi_{1}(x,y)f(y)dy,\qquad x\in \Om_{1},
\enn
which defines a bounded operator $V^{\ast}:L^{2}(\Om_{1})\rightarrow H^{2}(\Om_{1})$
(see Theorem 8.2 in \cite{CK}).
Now look for a solution in the form
 \be\label{v1}
 v(x)=&-V^{\ast}f+\wi{S}_{0,1}(\phi_{1}+\phi_{2})+\la_1\wi{K}_{1,1}(\psi_{1}+\psi_{2})
      +\wi{S}_{1,1}(\chi_{1}+\chi_{2})\;\;\; {\rm in}\;\;\Om_1,\\ \label{w1}
 w(x)=&V^{\ast}w+\wi{K}_{1,2}(\psi_{1}+\psi_{2})+\wi{S}_{1,2}(\chi_{1}+\chi_{2})
      \hspace{3.5cm}{\rm in}\;\;\Om_2
 \en
with six unknown densities $\phi_1\in C(S_0)$, $\phi_2\in H^{\frac{1}{2}}(S_0)$,
$\psi_1\in C(S_1)$, $\psi_2\in H^{\frac{1}{2}}(S_1)$, $\chi_1\in C(S_1)$,
$\chi_2\in H^{\frac{1}{2}}(S_1)$.
Then from the jump relations we see that the potentials $v,w$ given by (\ref{v1})
and (\ref{w1}) solve the boundary value problem $(\ref{vOm1})-(\ref{vwtr})$
provided the six densities satisfy the following system of integral equations:
 \ben
\label{3phi1}\phi_{1}+2(K^{'}_{0,1}+i\eta S_{0,1})\phi_{1}
    +2\la_{1}(\wi{T}_{1,1}+i\eta\wi{K}_{1,1})(\psi_{1}+\psi_{2})+2(\wi{K}^{'}_{1,1}
    +i\eta\wi{S}_{1,1})(\chi_{1}+\chi_{2})=2g&\mbox{on}\;S_{0},\\
\label{3phi2}\phi_{2}+2(K^{'}_{0,1}+i\eta S_{0,1})\phi_{2}
    =2\left(\frac{\pa }{\pa\nu}+i\eta\right)(V^{\ast}f)&\mbox{on}\;S_{0},\\
\label{3psi1}\psi_{1}-\mu V^{\ast}w+\mu\wi{S}_{0,1}(\phi_{1}+\phi_{2})
    +\mu(\la_{1}K_{1,1}-K_{1,2})\psi_{1}
    +\mu(S_{1,1}-S_{1,2})\chi_{1}=\mu p&\mbox{on}\;S_{1},\\
\label{3psi2}\psi_{2}+\mu(\la_{1}K_{1,1}-K_{1,2})\psi_{2}
    +\mu(S_{1,1}-S_{1,2})\chi_{2}=\mu V^{\ast}f&\mbox{on}\;S_{1},\\
\label{3chi1}\chi_{1}+\mu\la_{1}\frac{\pa}{\pa\nu}V^{\ast}w
    -\mu\wi{K}^{'}_{0,1}(\phi_{1}+\phi_{2})+\mu\la_{1}(T_{1,2}-T_{1,1})\psi_{1}
    +\mu(\la_{1}K^{'}_{1,2}-K^{'}_{1,1})\chi_{1}=-\mu q&\mbox{on}\;S_{1},\\
\label{3chi2}\chi_{2}+\mu(\la_{1}K^{'}_{1,2}-K^{'}_{1,1})\chi_{2}
    +\mu\la_1(T_{1,2}-T_{1,1})\psi_2=-\mu\frac{\pa}{\pa\nu}(V^{\ast}f)&\mbox{on}\;S_1
 \enn
where $\mu={2}/({\la_{1}+1})$.
This system is well defined since both the single and double layer potentials with
$H^{\frac12}$-density are continuous and the restriction to the boundary of
the derivative of the volume potential with $L^{2}$-density is in $H^{\frac12}$.
The trick to divide the density in a layer potential into a continuous
and $H^{\frac12}$-part has been used previously by Kirsch and P\"{a}iv\"{a}rinta \cite{KP}.


Precisely, we seek a solution $(v,w,\phi_1,\phi_2,\chi_1,\chi_2,\varphi_1,\varphi_2)\in
Y:=C(\ov{\Om_1})\times C(\ov{\Om_2})\times C(S_0)\times H^{\frac12}(S_0)\times C(S_1)\times
H^{\frac12}(S_1)\times C(S_1)\times H^{\frac12}(S_1)$ to the above system of eight integral
equations. Standard arguments using the uniqueness of solutions to the problem show that
this system admits at most one solution in $Y$. Therefore, the Riesz-Fredholm theorem
is applicable and yields the existence of solutions to the boundary value problem
with the estimate
 \ben
 \|v\|_{\infty,\;\ov{\Om}_1}&\leq&C(\|V^{\ast}f\|_{\infty,\ov{\Om}_1}+\|g\|_{\infty, S_0}
 +\left\|\left(\frac{\pa}{\pa\nu}+i\eta\right)(V^{\ast}f)\right\|_{H^{\frac12}(S_0\cup S_1)}\\
 &&+\|p\|_{\infty,\;S_1}+\|q\|_{\infty,\;S_1})\\
 &\leq& C(\|f\|_{L^2(\Om_1)}+\|g\|_{\infty,S_0}+\|p\|_{\infty,S_1}+\|q\|_{\infty,S_1})
 \enn
for some constant $C>0$. The lemma is thus proved.
\end{proof}

In order to prove the unique determination of $S_{1}$, we need the following lemma
which was proved in \cite{KP} (see \cite[Lemma 4.4]{KP}; note that our definition
of the parameter $\eta$ and our assumption that $k_{2}^{2}$ is not a Neumann eigenvalue
of $\Delta w+k_{2}^{2}nw=0$ in $\Om_2$ are different from those in \cite[Lemma 4.4]{KP}).

\begin{lemma}\label{lem3.4}
Assume that $f\in L^{2}(\Om_{2})$ and $g,\eta\in C(S_{1})$ with
$\eta\neq0$ and $\eta\leq0$ on $S_{0}$. Then the following problem has a unique solution
$w\in C^{2}(\Om_{1})\cap C(\ov{\Om_{1}})$ of
 \ben
 \label{wOm2new}\Delta w+k_{2}^{2}nw=f&&{\rm in}\;\;\Om_{2},\\
 \label{wbg}\frac{\pa w}{\pa\nu}+i\eta w=g&&{\rm on}\;\;S_1.
 \enn
Furthermore, there exists a constant $C>0$ such that
\ben
\|w\|_{\infty,\;\ov{\Om}_2}
\leq C(\|f\|_{L^2(\Om_1)}+\|g\|_{\infty,\;S_2}).
\enn
\end{lemma}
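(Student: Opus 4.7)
The plan is to parallel the argument used for Lemma \ref{lem3.3}, adapted to the single-domain setting in $\Om_2$ with only $S_1$ as a boundary. The proof has two ingredients: uniqueness via Green's identity combined with the impedance condition, and existence via an integral-equation ansatz that uses a volume potential for the source $f$ together with a single-layer potential on $S_1$ for the boundary data.

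For uniqueness, assume $f\equiv 0$ and $g\equiv 0$. Applying Green's first theorem in $\Om_2$, substituting $\Delta w=-k_2^2 nw$ and $\pa w/\pa\nu=-i\eta w$, I would obtain
\ben
 -\int_{\Om_2}|\nabla w|^2\,dx+k_2^2\int_{\Om_2}n|w|^2\,dx-i\int_{S_1}\eta|w|^2\,ds=0.
\enn
Taking the imaginary part and using $\Im(n)\geq 0$ together with $\eta\leq 0$ on $S_1$ (which I read as the intended hypothesis, in analogy with Lemma \ref{lem3.3}), the two remaining terms have opposite signs and must vanish separately. Since $\eta\not\equiv 0$ is continuous, there is a non-empty open set $\G\subset S_1$ on which $\eta<0$; hence $w\equiv 0$ on $\G$ and then $\pa w/\pa\nu=-i\eta w\equiv 0$ on $\G$ as well. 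The unique continuation principle applied to the equation $\Delta w+k_2^2 nw=0$ with $n\in C^{0,\al}$ now forces $w\equiv 0$ throughout $\Om_2$.

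For existence I seek $w$ in the form
\ben
 w=-V^{\ast}f+Vw+\wi{S}_{1,2}(\chi_1+\chi_2)\qquad\mbox{in}\;\Om_2,
\enn
where $(V^{\ast}f)(x):=\int_{\Om_2}\Phi_2(x,y)f(y)\,dy$ satisfies $\Delta V^{\ast}f+k_2^2V^{\ast}f=-f$, $V$ is the Lippmann--Schwinger volume potential from Section \ref{sec2} (for which $\Delta Vw+k_2^2 Vw=-k_2^2(n-1)w$), and $\chi_1\in C(S_1)$, $\chi_2\in H^{\frac{1}{2}}(S_1)$ are two unknown densities. A direct calculation verifies that any such $w$ satisfies $\Delta w+k_2^2 nw=f$ in $\Om_2$. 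Imposing the impedance condition on $S_1$ and using the jump relations for the single-layer potential converts the boundary condition into the integral equation
\ben
 (I+2K^{'}_{1,2}+2i\eta S_{1,2})(\chi_1+\chi_2)=2g-2\bigl[(\pa_\nu+i\eta)Vw\bigr]_{S_1}+2\bigl[(\pa_\nu+i\eta)V^{\ast}f\bigr]_{S_1}.
\enn
Following the Kirsch--P\"aiv\"arinta splitting trick used also in Lemma \ref{lem3.3}, I would route the $H^{\frac{1}{2}}(S_1)$-regular but generally non-continuous term $[(\pa_\nu+i\eta)V^{\ast}f]_{S_1}$ to the equation defining $\chi_2$ and keep the continuous remainder in the equation for $\chi_1$. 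The resulting coupled system for $(w,\chi_1,\chi_2)\in C(\ov{\Om_2})\times C(S_1)\times H^{\frac{1}{2}}(S_1)$ is a compact perturbation of the identity.

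The Riesz--Fredholm theory then reduces existence and the quantitative bound to injectivity of the full system. A homogeneous solution $(w,\chi_1,\chi_2)$ yields a $w$ that solves the homogeneous impedance boundary value problem, hence $w\equiv 0$ by the uniqueness step already established; consequently $Vw\equiv 0$, so the single-layer potential $\wi{S}_{1,2}(\chi_1+\chi_2)$ vanishes in $\Om_2$, and by exterior Dirichlet uniqueness for the Helmholtz equation with wave number $k_2$ (together with the Sommerfeld condition) it vanishes in $\R^3\ba S_1$, which via the jump relations forces $\chi_1+\chi_2=0$ on $S_1$. Injectivity of the impedance-type operator on $H^{\frac{1}{2}}(S_1)$ then gives $\chi_2=0$, and so $\chi_1=0$. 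The bounded inverse combined with the continuity $V^{\ast}:L^2(\Om_2)\rightarrow H^2(\Om_2)\hookrightarrow C(\ov{\Om_2})$ yields the stated estimate. The main technical obstacle, exactly as in the proof of Lemma \ref{lem3.3}, is keeping the densities in two different function spaces cleanly separated so that the $H^{\frac{1}{2}}$-regular (but not continuous) boundary traces of $V^{\ast}f$ do not contaminate the continuous part of the problem; this is precisely what the Kirsch--P\"aiv\"arinta splitting is designed to handle.
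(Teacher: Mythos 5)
Your proof is correct, but note that the paper does not actually prove Lemma \ref{lem3.4}: it is imported (up to the sign convention on $\eta$ and the non-eigenvalue assumption on $k_2^2$) from Kirsch and P\"aiv\"arinta \cite[Lemma 4.4]{KP}, so there is no in-paper argument to compare against line by line. What you have written is a faithful one-domain specialization of the paper's own proof of Lemma \ref{lem3.3}: the same Green's-identity uniqueness argument driven by $\Im(n)\geq 0$ and $\eta\leq 0$, and the same existence machinery (a Lippmann--Schwinger volume potential plus a layer-potential ansatz on the boundary, with the densities split into a continuous part and an $H^{1/2}$ part so that the non-continuous trace of $V^{\ast}f$ never contaminates the continuous equations); here a single-layer ansatz suffices because there is only one boundary condition on $S_1$. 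Two small remarks. First, you correctly read through the typos in the statement ($\Om_1$ for $\Om_2$, and $S_0$, $S_2$ for $S_1$); that is the intended reading. Second, in the uniqueness step, once $w=\pa w/\pa\nu=0$ on the open patch $\G\subset S_1$, the unique continuation principle does not apply to $\Om_2$ directly (the solution vanishes only on a boundary piece, not on an open subset of the domain, and $n\in C^{0,\al}$ is not analytic, so Holmgren is unavailable); the standard fix is to extend $w$ by zero across $\G$ into a small exterior neighborhood, observe that the extension solves $\Delta w+k_2^2\wi{n}w=0$ there for a suitable extension $\wi{n}$ of $n$ and vanishes on an open set, and then apply UCP. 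This is routine and does not affect the validity of your argument, but it deserves a sentence. Everything else --- the injectivity of the full integral-equation system via interior impedance uniqueness, exterior Dirichlet uniqueness and the jump relations, and the final estimate from $V^{\ast}:L^{2}(\Om_{2})\rightarrow H^{2}(\Om_{2})\hookrightarrow C(\ov{\Om_{2}})$ --- is sound and is essentially how \cite{KP} proceeds.
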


We are now in a position to state and prove the main result of this section.

\begin{theorem}\label{thm3}
Assume that $\la_{0}\neq1$, $\la_{1}\neq1$, let $S_{0}$, $\wi{S}_{0}$ be
two penetrable interfaces and let $\Om_{2}$, $\wi{\Om}_{2}$ be two penetrable obstacles
for the corresponding scattering problem. If the far field patterns of the scattered fields
for the same incident plane wave $u^{i}(x)=e^{ik_{0}x\cdot d}$ coincide at a fixed frequency
for all incident direction $d\in S$ and observation direction $\wi{x}\in S$,
then $S_0=\wi{S}_0$, $S_1=\wi{S}_1$.
\end{theorem}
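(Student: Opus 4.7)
The plan is to argue by contradiction in two sequential steps, following the template of Kirsch--Kress \cite{KK93}, Kirsch--P\"aiv\"arinta \cite{KP}, and Liu--Zhang \cite{LZ2}: first I show $S_0=\wi S_0$, and then, with that equality in hand, $S_1=\wi S_1$. In each step, the equality of the far-field patterns of plane waves is converted, via the mixed reciprocity of Lemma \ref{Mrr} and Rellich's lemma, into the coincidence of the scattered fields produced by point sources in the relevant unbounded component $G_j$ (Lemmas \ref{lem3.1} and \ref{lem3.2}). This coincidence will then be played against the singular behaviour of the interior solution that is forced by the transmission conditions, and the a priori estimates of Lemmas \ref{vapriori}--\ref{wapriori} together with the auxiliary impedance problems of Lemmas \ref{lem3.3}--\ref{lem3.4} will produce the contradiction.

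For the first step, suppose $S_0\neq\wi S_0$. After swapping the roles of $\Om$ and $\wi\Om$ if necessary, there is $x^*\in S_0$ with $x^*\notin\ov{\wi\Om}$, so that $x^*\in\wi\Om_0$ is a regular exterior point for the tilded configuration. I set $z_n:=x^*+\eps_n\nu(x^*)\in G_0$ with $\eps_n\downarrow 0$ and $\nu(x^*)$ the unit normal to $S_0$ pointing into $\Om_0$. Lemma \ref{lem3.1} provides
\[
u^s(x,z_n)=\wi u^s(x,z_n),\qquad x\in\ov{G_0},
\]
and since $\wi u^s$ depends continuously on the source point within $\wi\Om_0$, $\wi u^s(x^*,z_n)$ remains bounded as $n\to\infty$; hence so does $u^s(x^*,z_n)$. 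The transmission condition (\ref{ltbc0}) at $x^*$ then forces the interior trace to blow up:
\[
v(x^*,z_n)=u^s(x^*,z_n)+\Phi_0(x^*,z_n)\longrightarrow\infty.
\]

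Against this blow-up I aim for a uniform-in-$n$ bound on $v(x^*,z_n)$. With $f_n=-\Phi_0(\cdot,z_n)|_{S_0}$ and $g_n=-\pa\Phi_0(\cdot,z_n)/\pa\nu|_{S_0}$, the geometric estimate $|y-z_n|^2\gtrsim |y-x^*|^2+\eps_n^2$ on $S_0$ near $x^*$ keeps all four norms on the right-hand side of Lemma \ref{vapriori} uniformly bounded, so that $v(\cdot,z_n)$ and $\pa v(\cdot,z_n)/\pa\nu$ are uniformly bounded on $S_0\ba B_2$. Choosing $\eta\in C(S_0)$ with $\eta\leq 0$ and $\eta\not\equiv 0$, the pair $(v(\cdot,z_n),w(\cdot,z_n))$ satisfies an impedance--transmission problem of the type studied in Lemma \ref{lem3.3}, with impedance data $h_n:=\pa v(\cdot,z_n)/\pa\nu+i\eta v(\cdot,z_n)$ on $S_0$, vanishing transmission defect on $S_1$, and no volume source. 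I then compare $v(\cdot,z_n)$ against the auxiliary solution $v^\sharp_n$ supplied by Lemma \ref{lem3.3} from a cut-off of $h_n$ that is uniformly bounded on $S_0$; the uniqueness part of Lemma \ref{lem3.3}, applied after shrinking $B_2$ so that the residual impedance defect of $v(\cdot,z_n)-v^\sharp_n$ is negligible, propagates the bound on $v^\sharp_n$ to all of $\ov\Om_1$, in particular to $x^*$, contradicting the blow-up. The hypothesis $\la_0\neq 1$ enters here to guarantee that the transmission condition is genuinely non-trivial, so that the singular contribution of $\Phi_0$ is actually transmitted into the interior rather than cancelled by a matching of traces.

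With $S_0=\wi S_0$ established, the proof of $S_1=\wi S_1$ runs through the identical template one layer deeper: pick $x^*\in S_1$ with $x^*\notin\ov{\wi\Om_2}$, take $z_n\in G_1\subset\Om_1\ba\ov{\Om_2\cup\wi\Om_2}$ along the normal to $S_1$ at $x^*$, invoke Lemma \ref{lem3.2} to obtain $v(\cdot,z_n)=\wi v(\cdot,z_n)$ on $\ov{G_1}$, extract the blow-up of $w(x^*,z_n)$ from the transmission condition (\ref{2tbc1}), bound $w(\cdot,z_n)$ away from $x^*$ via Lemma \ref{wapriori}, and close the contradiction with Lemma \ref{lem3.4} in place of Lemma \ref{lem3.3}; the hypothesis $\la_1\neq 1$ now plays the role formerly played by $\la_0\neq 1$. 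The hardest part of both steps is the last one, namely engineering the cut-off so that the auxiliary impedance problem of Lemma \ref{lem3.3} or \ref{lem3.4} is fed a uniformly bounded datum, and simultaneously arguing that its uniqueness transfers the resulting bound from the auxiliary solution across the small ball around $x^*$ to the original interior solution, thereby contradicting the transmission-forced singularity at the disputed boundary point.
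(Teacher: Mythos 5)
Your overall architecture matches the paper's: reduce to point sources via Lemma \ref{Mrr} and Rellich (Lemmas \ref{lem3.1}, \ref{lem3.2}), send $z_j\to z_0\in S_0\setminus\ov{\wi\Om}$ along the normal, use the boundedness of $u_j=\wi u^s_j$ near $z_0$, invoke Lemmas \ref{vapriori}/\ref{wapriori} away from $z_0$ and Lemmas \ref{lem3.3}/\ref{lem3.4} to close, with the second layer handled identically. But the decisive step --- the one you yourself flag as ``the hardest part'' --- is not carried out, and the route you sketch for it does not work. You propose to contradict the blow-up of $v(x^*,z_n)$ by bounding $v(\cdot,z_n)$ itself through Lemma \ref{lem3.3}, feeding it a ``cut-off'' of the impedance datum $h_n=\pa v(\cdot,z_n)/\pa\nu+i\eta v(\cdot,z_n)$ and arguing the residual is ``negligible.'' It is not: near $x^*$ the normal derivative $\pa v_n/\pa\nu$ inherits the singularity of $\pa\Phi_0(\cdot,z_n)/\pa\nu$ (Lemma \ref{vapriori} only controls it on $S_0\ba B_2$), so the difference between $h_n$ and any uniformly bounded cut-off is exactly the unbounded singular part, and $v(\cdot,z_n)-v^\sharp_n$ solves an impedance--transmission problem with \emph{that} as datum; Lemma \ref{lem3.3} then gives no uniform control over it, and uniqueness alone transfers nothing. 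A telltale sign that the scheme cannot close is that your intended contradiction (boundedness of $v$ at $x^*$ versus $v=u^s+\Phi_0\to\infty$) would nowhere use $\la_0\neq1$, whereas the hypothesis is essential; your stated role for it (``the singularity is actually transmitted'') is not where it enters.

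The paper's resolution is to apply Lemma \ref{lem3.3} not to $v_j$ but to the combination $v^\ast_j:=\la_0 v_j-\Phi_0(\cdot,z_j)$. Its impedance datum $(\pa/\pa\nu+i\eta)v^\ast_j$ \emph{is} uniformly bounded on all of $S_0$: the term $i\eta v^\ast_j$ only sees $S_0\ba B_2$ (choose $\eta$ supported there), where Lemma \ref{vapriori} applies, while $\pa v^\ast_j/\pa\nu=\la_0\pa v_j/\pa\nu-\pa\Phi_0(\cdot,z_j)/\pa\nu$ equals $\pa u_j/\pa\nu$ on $S_0\cap B_2$ by the Neumann transmission condition in (\ref{ltbc0}) --- bounded because $u_j=\wi u^s_j$ is regular near $z_0$ --- and is bounded on $S_0\ba B_2$ by Lemma \ref{vapriori}. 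The source $(k_0^2-k_1^2)\Phi_0(\cdot,z_j)$ and the $S_1$-data stay bounded since $z_j$ is far from $\Om_1\cup S_1$. Lemma \ref{lem3.3} then yields $\|\la_0v_j-\Phi_0(\cdot,z_j)\|_{\infty,S_0\cap B_2}\le C$, while the Dirichlet transmission condition gives $\|v_j-\Phi_0(\cdot,z_j)\|_{\infty,S_0\cap B_2}=\|u_j\|_{\infty,S_0\cap B_2}\le C$; subtracting, $\|(\la_0-1)\Phi_0(\cdot,z_j)\|_{\infty,S_0\cap B_2}\le C$, which contradicts $\Phi_0(z_0,z_j)\to\infty$ precisely because $\la_0\neq1$. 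This linear-combination device is the missing idea; without it (or an equivalent), your proof does not go through, and the same gap recurs in your $S_1$ step.
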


\begin{proof}
We first prove that $S_{0}=\wi{S}_{0}$. Let $G_{0}$ be defined as in Lemma \ref{lem3.1}.
Assume that $S_{0}\neq\wi{S}_{0}$. Then we may assume without loss of generality
that there exists $z_{0}\in S_{0}\ba\ov{\wi{\Om}}$.
Let $B_{2}$ be a small ball centered at $z_{0}$ such that $B_{2}\cap\ov{\wi{\Om}}=\emptyset$.
Choose $h>0$ such that the sequence
 \ben
 z_j:=z_0+\frac{h}{j}\nu(z_{0}), \qquad j=1,2,\ldots,
 \enn
is contained in $G_{0}\cap B_{2}$, where $\nu(z_{0})$ is the outward normal to
$S_{0}$ at $z_{0}$. Using the notations in Lemma \ref{lem3.1} and letting
$(u^{s}_{j},\;v_{j},\;w_{j})$ and $(\wi{u}^{s}_{j},\;\wi{v}_{j},\;\wi{w}_{j})$
be the solutions of (\ref{lHE0})-(\ref{lrc}) with $z=z_j$.
Then, by Lemma \ref{lem3.1}, $u^{s}_{j}=\wi{u}^{s}_{j}:=u_{j}$ in $\ov{G_{0}}$.
Since $z_{0}$ has a positive distance from $\ov{\wi{\Om}}$,
we conclude from the well-posedness of the direct scattering problem that
there exists $C>0$ such that
 \be\label{ujes}
\|u_j\|_{\infty,\;S_0\cap B_2}+\left\|\frac{\pa u_j}{\pa\nu}\right\|_{\infty,\;S_0\cap B_2}
\leq C\qquad\mbox{for all}\;\; j\geq1.
 \en
Choose a small ball $B_1$ with center $z_0$ which is strictly contained in $B_2$. Since
 \ben
\|\Phi_{0}(\cdot,z_j)\|_{0,\;S_0}+\|\Phi_0(\cdot,z_j)\|_{1,\;\al,\;S_0\ba B_1}&\leq& C,\\
\|\frac{\pa\Phi_{0}}{\pa\nu}(\cdot,z_j)\|_{0,\;S_{0}}
   +\|\frac{\pa\Phi_{0}}{\pa\nu}(\cdot,z_j)\|_{0,\;\al,\;S_{0}\ba B_{1}}&\leq& C
 \enn
for some positive constant $C$ independent of $j$, we conclude from Lemma \ref{vapriori} that
 \ben
\|v_j\|_{\infty,\;S_0\ba B_2}+\left\|\frac{\pa v_j}{\pa\nu}\right\|_{\infty,\;S_0\ba B_2}
\leq C\qquad\mbox{for all}\;\; j\geq1.
 \enn
From this it follows that
 \be\label{vj-Phi}
\|\la_{0}v_j-\Phi_0(\cdot,z_j)\|_{\infty,\;S_0\ba B_2}
    &\leq& C\qquad\mbox{for all}\;\;j\geq1,\\ \label{pavj-Phi}
\left\|\la_0\frac{\pa v_j}{\pa\nu}-\frac{\pa\Phi_0(\cdot,z_j)}{\pa\nu}
    \right\|_{\infty,\;S_0\ba B_2}&\leq& C\qquad\mbox{for all}\;\; j\geq1.
 \en
The transmission boundary conditions yield
 \be\label{vj-Phi2}
\|v_j-\Phi_{0}(\cdot,z_j)\|_{\infty,\; S_{0}\cap B_{2}}=\|u_j\|_{\infty,\; S_{0}\cap B_{2}}
   &\leq& C\qquad \mbox{for all}\;\; j\geq1,\\ \label{pavj-Phi2}
\left\|\la_{0}\frac{\pa v_{j}}{\pa\nu}-\frac{\pa \Phi_{0}(\cdot,z_j)}{\pa\nu}
   \right\|_{\infty,\; S_0\cap B_2}=\left\|\frac{\pa u_j}{\pa\nu}\right\|_{\infty,\;S_0\cap B_2}
   &\leq& C\qquad \mbox{for all}\;\; j\geq1.
 \en
Combining (\ref{pavj-Phi}) and (\ref{pavj-Phi2}) yields
 \be\label{pavj-Phi3}
\left\|\la_{0}\frac{\pa v_{j}}{\pa\nu}-\frac{\pa\Phi_0(\cdot,z_j)}{\pa\nu}
\right\|_{\infty,\; S_{0}}\leq C\qquad\mbox{for all}\;\; j\geq1.
 \en
This can be used together with (\ref{vj-Phi}) to prove the estimate
 \be\label{vj-Phi3}
\|\la_{0}v_j-\Phi_{0}(\cdot,z_j)\|_{\infty,S_{0}}\leq C \qquad \mbox{for all}\;\; j\geq1.
 \en
In fact, choose a function $\eta\in C^{2}(S_{0})$ supported in $S_{0}\ba B_{2}$
with $\eta\not\equiv0$ and $\eta\le0$ on $S_0$.
Then $v^{\ast}_{j}:=\la_{0}v_j-\Phi_{0}(\cdot,z_j)$ solves the boundary value problem:
 \ben
 \Delta v^{\ast}_{j}+k_1^2v^{\ast}_{j}
     =(k^2_0-k^2_1)\Phi_0(\cdot,z_j)&&\quad\mbox{in}\;\;\Om_1,\\
 \Delta w^{\ast}_{j}+k_2^2nw^{\ast}_{j}=0&&\quad\mbox{in}\;\;\Om_2,\\
 \frac{\pa v^{\ast}_j}{\pa\nu}+i\eta v^{\ast}_j
    =(\frac{\pa}{\pa\nu}+i\eta)[\la_0v_j-\Phi_0(\cdot,z_j)]&&\quad\mbox{on}\;\;S_0,\\
 v^{\ast}_j-w^{\ast}_{j}=-\Phi_0(\cdot,z_j),\;\;\;
 \frac{\pa v^{\ast}_j}{\pa\nu}-\la_{1}\frac{\pa w^{\ast}_j}{\pa\nu}
    =-\frac{\pa\Phi_{0}(\cdot,z_j)}{\pa\nu}&&\quad\mbox{on}\;\; S_{1},
\enn
where $w^{\ast}_{j}=\la_{0}w_{j}$. Since, by (\ref{vj-Phi}) and (\ref{pavj-Phi3}),
$f:=(k^{2}_{0}-k^{2}_{1})\Phi_{0}(\cdot,z_j)\in L^{2}(\Om_{1}),$
$g:=\left({\pa}/{\pa\nu}+i\eta\right)[\la_{0}v_j-\Phi_{0}(\cdot,z_j)]\in C(S_{0})$,
$p:=-\Phi_{0}(\cdot,z_j)\in C(S_{1})$ and
$q:=-{\pa\Phi_{0}(\cdot,z_j)}/{\pa\nu}\in C(S_{1})$,
then the desired result (\ref{vj-Phi3}) follows from Lemma \ref{lem3.3}.

Now the triangle inequality together with (\ref{vj-Phi2}) and (\ref{vj-Phi3}) implies that
\ben
\|(\la_{0}-1)\Phi_{0}(\cdot,z_j)\|_{\infty, S_{0}\cap B_{2}}
 &\leq& \|\la_{0}\Phi_{0}(\cdot,z_j)-\la_{0}v_j\|_{\infty, S_{0}\cap B_{2}}
   + \|\la_{0}v_j-\Phi_{0}(\cdot,z_j)\|_{\infty, S_{0}\cap B_{2}}\\
 &\leq& C.
\enn
This is a contradiction since $\la_0\neq1$ and
$|\Phi_0(z_0,z_j)|_{\infty,S_0\cap B_2}\rightarrow\infty$ as $j\rightarrow\infty.$
Thus, $S_0=\wi{S}_0$.

Arguing similarly as above, but using the point source $\Phi_{1}$ in $G_{1}$ and
Lemmas \ref{wapriori}, \ref{lem3.2} and \ref{lem3.4}, we can easily prove
that $S_1=\wi{S}_1$. The proof is thus complete.
\end{proof}

\begin{remark}{\rm
Our method can be extended straightforwardly to both the 2D case and the case of a
multilayered medium, and a similar result can be obtained (that is, all the interfaces
between the layered media as well as the shape of the embedded obstacle
can be uniquely determined).
}
\end{remark}

\section{Unique determination of the refractive index $n$}\label{sec4}
\setcounter{equation}{0}

In this section we shall prove a uniqueness theorem for recovering the refractive index $n$
from the far field pattern. In doing this, we need the following completeness result.

\begin{lemma}\label{lem4.1}
The set $\left\{{\pa w(\cdot,d)}/{\pa\nu}|\;d\in S\right\}$ of normal derivatives of
the fields $w(\cdot,d)$ corresponding to incident plane waves with directions $d\in S$
are complete in $L^{2}(S_{1})$ provided $k_{2}^{2}$ is not a Neumann eigenvalue
of $\Delta w+k_{2}^{2}nw=0$ in $\Om_{2}$.
\end{lemma}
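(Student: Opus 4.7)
The plan is a duality argument: assume $g \in L^2(S_1)$ satisfies
$\int_{S_1} g\,\partial_\nu w(\cdot,d)\,ds = 0$ for every $d\in S$, and show $g\equiv 0$. The heart of the proof is to engineer an auxiliary radiating transmission problem so that the orthogonality integral arises as a surface integral over $S_1$, which can then be chased through the layers by Green's identity all the way out to infinity.

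Concretely, first I construct $(\tilde u,\tilde v,\tilde w)$ satisfying the homogeneous equations (\ref{HE0})--(\ref{HE2}), the homogeneous transmission on $S_0$ (i.e.\ $\tilde u-\tilde v=0$ and $\partial_\nu\tilde u-\lambda_0\partial_\nu\tilde v=0$), the radiation condition on $\tilde u$, and on $S_1$ the inhomogeneous transmission
\[
\tilde v-\tilde w=g/\lambda_1,\qquad \partial_\nu\tilde v-\lambda_1\partial_\nu\tilde w=0.
\]
Theorem \ref{well-posedness} (applied after approximating $g\in L^2(S_1)$ by smoother densities and passing to the limit via the continuous dependence estimate) furnishes a unique solution. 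Then I apply Green's second identity in $\Omega_2$ to the pair $(\tilde w, w(\cdot,d))$; both solve $\Delta+k_2^2 n=0$, so only the $S_1$-surface term survives. Eliminating $\tilde w,\partial_\nu\tilde w$ via the auxiliary transmission conditions and $w(\cdot,d),\partial_\nu w(\cdot,d)$ via the direct transmission conditions on $S_1$ yields, after one line of algebra, the identity
\[
\int_{S_1}\bigl(\tilde v\,\partial_\nu v(\cdot,d)-v(\cdot,d)\,\partial_\nu\tilde v\bigr)\,ds \;=\; \int_{S_1} g\,\partial_\nu w(\cdot,d)\,ds \;=\; 0.
\]

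Next I apply Green's identity in $\Omega_1$ to $(\tilde v,v(\cdot,d))$ to transport this $S_1$-integral to an $S_0$-integral, then use the homogeneous transmission on $S_0$ to rewrite it, up to the factor $1/\lambda_0$, as $\int_{S_0}(\tilde u\,\partial_\nu u(\cdot,d)-u(\cdot,d)\,\partial_\nu\tilde u)\,ds$. Splitting $u=u^i+u^s$ and using that $\tilde u$ and $u^s(\cdot,d)$ are both radiating solutions in $\Omega_0$ (so that Green's identity on $\Omega_0\cap B_R$, together with the standard vanishing of the sphere integral at infinity for two radiating solutions, makes the $u^s$-contribution disappear), I obtain
\[
\int_{S_0}\bigl(\tilde u(y)\,\partial_{\nu(y)}e^{ik_0 y\cdot d}-e^{ik_0 y\cdot d}\,\partial_{\nu(y)}\tilde u(y)\bigr)\,ds(y)=0\qquad\text{for all } d\in S.
\]
Comparing with the Green representation formula and the far-field asymptotics of $\Phi_0$, the left-hand side is exactly $4\pi\,\tilde u^\infty(-d)$, so $\tilde u^\infty\equiv 0$ on $S$.

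Finally I chain back through the layers. Rellich's lemma gives $\tilde u\equiv 0$ in $\Omega_0$; the homogeneous transmission on $S_0$ then yields $\tilde v=\partial_\nu\tilde v=0$ on $S_0$, and Holmgren's uniqueness theorem forces $\tilde v\equiv 0$ in $\Omega_1$. Plugging $\tilde v|_{S_1}=\partial_\nu\tilde v|_{S_1}=0$ into the inhomogeneous transmission on $S_1$ produces $\tilde w=-g/\lambda_1$ and $\partial_\nu\tilde w=0$ on $S_1$, so $\tilde w$ solves $\Delta\tilde w+k_2^2 n\tilde w=0$ in $\Omega_2$ with vanishing Neumann trace; the hypothesis that $k_2^2$ is not a Neumann eigenvalue forces $\tilde w\equiv 0$, and reading off the trace on $S_1$ gives $g\equiv 0$. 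The main obstacle is not in any single step but in choosing the auxiliary transmission data on $S_1$ so that the chain of Green's identities collapses to produce \emph{exactly} the orthogonality integral; secondarily, there is a minor regularity issue in solving the auxiliary problem with $L^2$ data, which I would handle by a standard density/limit argument using the well-posedness from Theorem \ref{well-posedness}.
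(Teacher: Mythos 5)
Your argument is correct: the choice of jump data $\tilde v-\tilde w=g/\lambda_1$, $\partial_\nu\tilde v=\lambda_1\partial_\nu\tilde w$ on $S_1$ does make the chain of Green's identities collapse to exactly $\int_{S_1}g\,\partial_\nu w(\cdot,d)\,ds$, the far-field identification $\tilde u^\infty(-d)=\frac{1}{4\pi}\int_{S_0}(\tilde u\,\partial_\nu u^i-u^i\,\partial_\nu\tilde u)\,ds$ is the standard representation from Colton--Kress, and the endgame (Rellich, Holmgren, the Neumann non-eigenvalue hypothesis) closes the argument. However, your route differs from the paper's in a genuine way. The paper does not work with the far-field pattern at all: it first replaces the plane-wave family by the completeness of $\{e^{ik_0x\cdot d}|_{\partial B_R}\}$ in $L^2(\partial B_R)$ for a large ball $B_R\supset\Omega$ (choosing $R$ so that $k_0^2$ is not an interior Dirichlet eigenvalue), reduces the lemma to the density of the range of an operator $\Lambda:L^2(\partial B_R)\to L^2(S_1)$, computes its $L^2$-adjoint explicitly --- which involves the same auxiliary transmission problem you construct (with jump $-\overline{\psi}$ in the Dirichlet trace on $S_1$ and no jump in the co-normal) \emph{plus} an interior Dirichlet problem in $B_R$ --- and proves injectivity of the adjoint by gluing the exterior radiating field and the interior Dirichlet solution into an entire radiating solution that must vanish. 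Your version dispenses with the ball, the Dirichlet-eigenvalue side condition, the adjoint computation and the gluing trick, replacing all of them with the single observation that orthogonality for all $d$ forces $\tilde u^\infty\equiv0$; this is arguably cleaner. What the paper's formulation buys is an explicit adjoint operator, which is the form in which such density statements are usually packaged for sampling/factorization-type methods. Both proofs share the same unaddressed technicality, namely solving the auxiliary transmission problem with only $L^2$ data on $S_1$ (the paper's Theorem \ref{well-posedness} is stated in H\"older spaces and its estimate controls nothing in terms of $\|g\|_{L^2(S_1)}$, so your proposed ``pass to the limit via the continuous dependence estimate'' does not literally work; one needs an $H^{1/2}$/variational solvability statement or a mollification of $g$ combined with continuity of the whole chain of identities in $L^2(S_1)$). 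Since the paper itself glosses over exactly this point, I do not regard it as a gap specific to your argument, but you should state the limiting argument more carefully than ``use the estimate of Theorem \ref{well-posedness}.''
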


\begin{proof}
Let $B_{R}$ be a large ball that contains $\Om$ and such that $k_{0}^{2}$
is not a Dirichlet eigenvalue of $\Delta w+k_{2}^{2}nw=0$ in $B_{R}$.
Then the restriction to $\pa B_{R}$ of the set of plane waves
${e^{ik_{0}x\cdot d}|\;d\in S}$ is complete in $L^{2}(\pa B_{R})$
(see Theorem 5.5 in \cite{CK}). Thus we only have to show that the operator
$\La: L^{2}(\pa B_{R})\rightarrow L^{2}(S_{1})$ has a dense range
$\La\phi={\pa w}/{\pa\nu}$, where $w$ solves the boundary problem (\ref{0HE0})-(\ref{0rc})
and $\phi$ is the boundary data of the interior Dirichlet problem
 \ben
 \label{ui}\Delta u^{i}+k^{2}_{0}u^{i}=0\;\;\;\;{\rm in}\;\;B_{R},
 \qquad u^{i}=\phi\;\;\;\;{\rm on}\;\;\pa B_{R}.
 \enn
A simple repeated application of Green's formulas yields that
the $L^{2}$-adjoint $\La^{\ast}$ of $\La$ is given by
 \ben\label{Laast}
 \La^{\ast}\psi:=\ov{\left\{\frac{\pa u^{\ast}}{\pa\nu}
    -\frac{\pa\wi{u}}{\pa\nu}\right\}}\Big|_{\pa B_R},\;\;\;\psi\in L^2(S_1),
 \enn
where $(u^{\ast},v^{\ast},w^{\ast})$ solves
 \be
\label{aHE0}\Delta u^{\ast}+k_{0}^{2}u^{\ast}=0&&\qquad \mbox{in}\;\;\Om_{0},\\
\label{aHE1}\Delta v^{\ast}+k_{1}^{2}v^{\ast}=0&&\qquad \mbox{in}\;\;\Om_{1},\\
\label{aHE2}\Delta w^{\ast}+k_{2}^{2}nw^{\ast}=0&&\qquad \mbox{in}\;\;\Om_{2},\\
\label{atbc}u^\ast-v^\ast=0,\;\;\frac{\pa u^\ast}{\pa\nu}-\la_0\frac{\pa v^\ast}{\pa\nu}=0
                                                &&\qquad\mbox{on}\;\;S_0,\\
\label{atbc1}v^{\ast}-w^{\ast}=-\ov{\psi},\;\;
                \frac{\pa v^{\ast}}{\pa\nu}-\la_1\frac{\pa w^{\ast}}{\pa\nu}=0
                                                &&\qquad \mbox{on}\;\;S_{1},\\
\label{arc}\lim_{r\rightarrow\infty}r\left(\frac{\pa u^{\ast}}{\pa r}
               -ik_{0}u^{\ast}\right)=0&&\qquad r=|x|,
 \en
and $\wi{u}$ is a solution of the interior Dirichlet problem
 \ben
 \label{wiu}\Delta \wi{u}+k^{2}_{0}\wi{u}=0\;\;\;\;{\rm in}\;\;B_{R},
              \qquad \wi{u}=u^{\ast}\;\;\;\;{\rm on}\;\;\pa B_{R}.
 \enn
We just need to show that $\La^{\ast}$ is injective.
Let $\La^{\ast}\psi=0$. Then we have
${\pa u^{\ast}}/{\pa\nu}={\pa\wi{u}}/{\pa\nu}$ and $u^{\ast}=\wi{u}$ on $\pa B_{R}$.
Define
 \ben
 \wi{v}=\left\{\begin{array}{ll}\ds
        u^{\ast}, & \qquad {\rm in}\;\;\R^{3}\ba\ov{B_{R}},\cr
        \ds\wi{u}, &\qquad {\rm in}\;\;B_{R}
        \end{array}\right.
 \enn
Then $\wi{v}$ is an entire solution to the Helmholtz equation
$\Delta \wi{v}+k^{2}_{0}\wi{v}=0$ in $\R^3$ (see \cite{KBook})
satisfying the radiation condition and therefore must vanish identically in $\R^3$.
Hence, $u^{\ast}=0$ in $\R^{3}\ba\ov{B_{R}}$. By the unique continuation principle,
$u^{\ast}=0$ in $\Om_{0}$. Using the transmission conditions (\ref{atbc})
and Homogren's uniqueness theorem we conclude that $v^{\ast}=0$ in $\Om_{1}$
and therefore $v^{\ast}={\pa v^{\ast}}/{\pa\nu}=0$ on $S_{1}$.
By the transmission conditions (\ref{atbc1}), $\frac{\pa w^{\ast}}{\pa\nu}=0$ on $S_1$.
Thus we have $w^{\ast}=0$ in $\Om_{2}$ since $k_{2}^{2}$ is not a Neumann eigenvalue
of $\Delta w+k_{2}^{2}nw=0$ in $\Om_{2}$. Therefore, $\psi=0$
by the transmission conditions (\ref{atbc1}). This completes the proof.
\end{proof}

Based on this completeness result, we are able to prove the following orthogonality relation.

\begin{lemma}\label{orthogonality}
If the far field patterns for the refractive indices $n$ and $\wi{n}$ coincide,
then for any solution $w\in C^{2}(\ov{\Om_{2}})$ of $\Delta w+k_{2}^{2}nw=0$
in $\Om_{2}$ and any solution $\wi{w}\in C^{2}(\ov{\Om_{2}})$ of
$\Delta\wi{w}+k_{2}^{2}\wi{n}\wi{w}=0$ in $\Om_{2}$ we have the relation:
 \be\label{orth}
 \int_{\Om_{2}}(n-\wi{n})w\wi{w}dx=0.
 \en
\end{lemma}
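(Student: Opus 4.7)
\medskip

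\noindent\textbf{Proof plan.}
The strategy is to derive the orthogonality first for the special family of solutions generated by plane-wave incidence, and then upgrade to arbitrary $C^{2}(\ov{\Om_2})$ solutions via the completeness result of Lemma \ref{lem4.1}.

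First I would exploit Theorem \ref{thm3}, which under the hypothesis $u^\infty=\wi u^\infty$ already gives $S_0=\wi S_0$ and $S_1=\wi S_1$, so both transmission problems live on the same geometry. Denote by $(u(\cdot,d),v(\cdot,d),w(\cdot,d))$ and $(\wi u(\cdot,d),\wi v(\cdot,d),\wi w(\cdot,d))$ the solutions corresponding to $n$ and $\wi n$ for the incident plane wave $e^{ik_0x\cdot d}$. Rellich's lemma applied to the equal far-field patterns yields $u^s(\cdot,d)=\wi u^s(\cdot,d)$ in $\Om_0$, hence the same Cauchy data $(u,\pa u/\pa\nu)$ on $S_0$. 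The transmission conditions (\ref{0tbc}) then transfer this to $(v,\pa v/\pa\nu)=(\wi v,\pa\wi v/\pa\nu)$ on $S_0$, and Holmgren's theorem applied in $\Om_1$ propagates the equality to $v(\cdot,d)=\wi v(\cdot,d)$ throughout $\Om_1$. Finally, the transmission conditions (\ref{0tbc1}) on $S_1$ give
\[
w(\cdot,d)=\wi w(\cdot,d),\qquad \frac{\pa w(\cdot,d)}{\pa\nu}=\frac{\pa\wi w(\cdot,d)}{\pa\nu}\qquad\text{on }S_1.
\]

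Next I would apply Green's second identity in $\Om_2$ to $w(\cdot,d)$ (satisfying $\Delta w+k_2^2nw=0$) and $\wi w(\cdot,d')$ (satisfying $\Delta\wi w+k_2^2\wi n\wi w=0$) to obtain
\[
-k_2^2\int_{\Om_2}(n-\wi n)\,w(\cdot,d)\,\wi w(\cdot,d')\,dx=\int_{S_1}\!\left[\frac{\pa w(\cdot,d)}{\pa\nu}\wi w(\cdot,d')-w(\cdot,d)\frac{\pa\wi w(\cdot,d')}{\pa\nu}\right]ds.
\]
Using the boundary identity from the previous paragraph to replace $w(\cdot,d)$ and $\pa_\nu w(\cdot,d)$ on $S_1$ by their tilded counterparts, the right-hand side becomes
\[
\int_{S_1}\!\left[\frac{\pa\wi w(\cdot,d)}{\pa\nu}\wi w(\cdot,d')-\wi w(\cdot,d)\frac{\pa\wi w(\cdot,d')}{\pa\nu}\right]ds,
\]
and this integral vanishes by one more application of Green's identity, because $\wi w(\cdot,d)$ and $\wi w(\cdot,d')$ both satisfy the \emph{same} PDE $\Delta\wi w+k_2^2\wi n\wi w=0$ in $\Om_2$. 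Hence the orthogonality relation $\int_{\Om_2}(n-\wi n)\,w(\cdot,d)\,\wi w(\cdot,d')\,dx=0$ holds for every $d,d'\in S$.

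Finally I would remove the plane-wave restriction by a density argument. Lemma \ref{lem4.1} asserts that $\{\pa w(\cdot,d)/\pa\nu:\,d\in S\}$ is dense in $L^2(S_1)$, and the same statement holds for the $\wi n$-problem. Because $k_2^2$ is not a Neumann eigenvalue of $\Delta W+k_2^2nW=0$ in $\Om_2$, the Neumann problem is well posed, so there is a bounded solution operator from $L^2(S_1)$ into, say, $L^2(\Om_2)$ sending $\pa W/\pa\nu$ to $W$. Given an arbitrary $w\in C^2(\ov{\Om_2})$ with $\Delta w+k_2^2nw=0$, I approximate its Neumann trace on $S_1$ in $L^2(S_1)$ by finite linear combinations of $\{\pa w(\cdot,d_k)/\pa\nu\}$, whose corresponding solutions then converge to $w$ in $L^2(\Om_2)$; an entirely analogous approximation works for $\wi w$. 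Passing to the limit in the orthogonality obtained in the previous step yields the claim.

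The main obstacle is precisely this last passage: Lemma \ref{lem4.1} gives completeness of Neumann data on $S_1$, while the statement of the lemma quantifies over all $C^2(\ov{\Om_2})$ solutions in $\Om_2$. Bridging the gap hinges on the non-eigenvalue assumption on $k_2^2$, which supplies the bounded Neumann-to-solution map needed to convert $L^2(S_1)$-density of traces into density of solutions in $\Om_2$ and thereby legitimize the limit in the volume integral.
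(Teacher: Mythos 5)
Your proof is correct, but it takes a genuinely different route from the paper's in both halves. In the first step the paper does not pair $w(\cdot,d)$ with a second plane-wave solution $\wi w(\cdot,d')$: it tests $w(\cdot,d)$ directly against an \emph{arbitrary} solution $\wi w$ of the $\wi n$-equation, using the identity $k_2^2(n-\wi n)\,w(\cdot,d)=-\bigl(\Delta+k_2^2\wi n\bigr)\bigl[w(\cdot,d)-\wi w(\cdot,d)\bigr]$ and integrating against $\wi w$; the boundary terms in Green's identity vanish because $w(\cdot,d)-\wi w(\cdot,d)$ has zero Cauchy data on $S_1$ (the same Rellich/Holmgren input you use). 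This makes the second half one-sided: only the family $\{w(\cdot,d)\}$ has to be shown dense among solutions. For that density the paper argues by duality: if $f\in L^2(\Om_2)$ annihilates every $w(\cdot,d)$, solve $\Delta u+k_2^2nu=f$ with homogeneous Neumann data on $S_1$, deduce $\int_{S_1}u\,\pa_\nu w(\cdot,d)\,ds=0$ for all $d$, invoke Lemma \ref{lem4.1} to get $u=0$ on $S_1$, and conclude $\int_{\Om_2} f w\,dx=0$ for every solution $w$, contradicting Hahn--Banach. Your route instead uses the bounded Neumann-to-solution operator to upgrade $L^2(S_1)$-density of the Neumann traces to $L^2(\Om_2)$-density of the solutions and then passes to the limit in the bilinear form (legitimate, since $(w,\wi w)\mapsto\int(n-\wi n)w\wi w$ is continuous on $L^2\times L^2$). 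Both arguments are sound; the trade-off is that your version must invoke Lemma \ref{lem4.1} for the $\wi n$-problem as well as for $n$ (an extra application the paper avoids, though harmless under the standing assumptions), while your approximation step is arguably more transparent than the Hahn--Banach detour. One small remark: your appeal to Theorem \ref{thm3} to equate the interfaces is unnecessary here, since in Section \ref{sec4} the interfaces are assumed known.
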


\begin{proof}
We first prove (\ref{orth}) for the special case $w=w(\cdot,d)$.
By Rellich's lemma and Holmgren's uniqueness theorem,
we have $w(\cdot, d)=\wi{w}(\cdot,d)$ and
${\pa w(\cdot,d)}/{\pa\nu}={\pa\wi{w}(\cdot,d)}/{\pa\nu}$ on $S_{1}$.
From this and the equations satisfied by $w(x,d),\wi{w}(\cdot,d)$ and $\wi{w}$
we deduce that
 \be\label{orth1}
 \int_{\Om_{2}}(n-\wi{n})w(x,d)\wi{w}(x)dx
 &=&-\frac{1}{k_2^2}\int_{\Om_2}[\Delta w(x,d)+k_2^2\wi{n}w(x,d)]\wi{w}(x)dx\cr
 &=&-\frac{1}{k_2^2}\int_{\Om_2}\left\{\Delta[w(x,d)-\wi{w}(x,d)]
    +k_2^2\wi{n}[w(x,d)-\wi{w}(x,d)]\right\}\wi{w}(x)dx\cr
 &=&-\frac{1}{k_2^2}\int_{\Om_2}[w(x,d)-\wi{w}(x,d)](\Delta\wi{w}+k_2^2\wi{n}\wi{w})dx=0,
 \en
where use has been made of Green's first theorem in getting the third equation.

To complete the proof we need to show that any general $w$ can be approximated in $L^2(\Om_2)$
by functions $w(\cdot,d),\;d\in S$. Let us assume the opposite, that is,
the set $\{w(\cdot,d)|\;d\in S\}$ is not dense in $L^2(\Om_2)$ sense in
$W:=\{w\in C^2(\ov{\Om_2})|\;\Delta w+k_{2}^{2}nw=0\;\mbox{in}\;\Om_2\}$.
Then by Hahn-Banach theorem there would be an $f\in L^2(\Om_2)$ such that
 \be\label{fwd}
 \int_{\Om_{2}}f(x)w(x,d)dx=0
 \en
for $w(x,d)$ with all $d\in S$, but for some $w\in W$,
 \be\label{fw}
 \int_{\Om_{2}}f(x)w(x)dx\neq0.
 \en
Let $u\in H^2(\Om_2)$ be a solution to the interior Neumann problem
 \be\label{uf}
 \Delta u+k_2^2nu=f\;\;\;{\rm in}\;\;\Om_2,\qquad\frac{\pa u}{\pa\nu}=0\;\;\;{\rm on}\;\;S_1.
 \en
From (\ref{fwd}) and (\ref{uf}) it follows on using Green's first theorem that
 \ben
 0=\int_{\Om_{2}}f(x)w(x,d)dx=\int_{\Om_{2}}[\Delta u(x)+k_{2}^{2}nu(x)]w(x,d)dx
  =\int_{S_{1}}u\frac{w(x,d)}{\pa\nu}ds.
 \enn
This, together with Lemma \ref{lem4.1},
implies that $u=0$ on $S_1$. Thus, by Green's first theorem we conclude that
 \ben
 \int_{\Om_2}fwdx=\int_{\Om_{2}}(\Delta u+k_{2}^{2}nu)wdx
  =\int_{\Om_2}u(\Delta w+k_{2}^{2}nw)dx=0,
 \enn
which contradicts (\ref{fw}). The proof is thus complete.
\end{proof}

It is well known that the products $w\wi{w}$ of solutions to $\Delta w+k_{2}^{2}nw=0$
and $\Delta \wi{w}+k_{2}^{2}\wi{n}\wi{w}=0$ for two different
refractive indices $n$ and $\wi{n}$ is complete in $L^{2}(\Om_{2})$.
Such a result was first established by Sylvester and Uhlmann \cite{SU87} and
simplified by H\"{a}hner \cite{Ha96}.
The reader is also referred to the argument of Theorem 6.2 in a recent review
paper by Uhlmann \cite{Uhl09}. Using this completeness result and Lemma \ref{orthogonality},
we have in fact proved that $n=\wi{n}$.

\begin{theorem}\label{n-uni}
Assume that the interfaces $S_{j}\;\;(j=0,1)$ are known and $k_2^2$ is not a Neumann eigenvalue of
$\Delta w+k_{2}^{2}nw=0$ in $\Om_2$.
Then the inhomogeneity $n$ is uniquely
determined by the far field pattern $u^{\infty}(\widehat{x},\;d)$ for all $\widehat{x},\;d\in S$.
\end{theorem}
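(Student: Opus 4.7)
The plan is to combine the orthogonality identity of Lemma \ref{orthogonality} with the classical completeness of products of solutions to Schr\"odinger-type equations due to Sylvester and Uhlmann \cite{SU87} (simplified by H\"ahner \cite{Ha96}), which the authors have already announced in the paragraph preceding the theorem. Thus the proof will consist essentially of a two-step reduction: first extract an orthogonality relation on $\Om_2$, then invoke the CGO density result as a black box.

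Concretely, assume that two refractive indices $n$ and $\wi{n}$, both satisfying the Neumann-eigenvalue hypothesis, produce the same far field pattern $u^{\infty}(\wi{x},d)$ for all $\wi{x},d\in S$. Lemma \ref{orthogonality} then yields
\begin{equation*}
\int_{\Om_{2}}(n-\wi{n})w\wi{w}\,dx=0
\end{equation*}
for every pair $(w,\wi{w})$ with $w\in C^{2}(\ov{\Om_{2}})$ solving $\Delta w+k_{2}^{2}nw=0$ in $\Om_{2}$ and $\wi{w}\in C^{2}(\ov{\Om_{2}})$ solving $\Delta\wi{w}+k_{2}^{2}\wi{n}\wi{w}=0$ in $\Om_{2}$. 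Next I would appeal to the Sylvester--Uhlmann/H\"ahner construction of complex geometrical optics solutions $w_{\zeta}(x)=e^{\zeta\cdot x}(1+r_{\zeta}(x))$ and $\wi{w}_{\wi{\zeta}}(x)=e^{\wi{\zeta}\cdot x}(1+\wi{r}_{\wi{\zeta}}(x))$ with $\zeta,\wi{\zeta}\in\C^{3}$, $\zeta\cdot\zeta=\wi{\zeta}\cdot\wi{\zeta}=-k_{2}^{2}$, and remainder terms tending to zero in a suitable norm as $|\zeta|\to\infty$. Choosing $\zeta,\wi{\zeta}$ so that $\zeta+\wi{\zeta}=i\xi$ for an arbitrary $\xi\in\R^{3}$, the products $w_{\zeta}\wi{w}_{\wi{\zeta}}$ converge to the pure exponential $e^{i\xi\cdot x}$, so that the Fourier transform of the $L^{2}(\Om_{2})$ function $n-\wi{n}$ (extended by zero outside $\Om_{2}$) vanishes at every $\xi\in\R^{3}$; by Fourier inversion one concludes $n=\wi{n}$ a.e.\ in $\Om_{2}$.

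The main work is therefore not in the theorem itself but in having set up the two earlier ingredients: Lemma \ref{lem4.1} (completeness of the normal traces $\{\pa w(\cdot,d)/\pa\nu:d\in S\}$ in $L^{2}(S_{1})$, where the Neumann-eigenvalue hypothesis on $k_{2}^{2}$ is crucial for injectivity of the adjoint), and Lemma \ref{orthogonality} (the orthogonality identity, whose proof upgrades the special case $w=w(\cdot,d)$ via Rellich and Holmgren to arbitrary $w\in W$ using Lemma \ref{lem4.1} and an interior Neumann solve). Once these are in place, the deduction $n=\wi{n}$ is a one-line consequence of \cite{SU87, Ha96}, and no new geometric or PDE difficulty enters. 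The only genuine obstacle---the construction of CGO solutions with controllable remainders---is handled externally by the cited works.
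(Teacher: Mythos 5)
Your proposal is correct and follows essentially the same route as the paper: the authors likewise combine Lemma \ref{orthogonality} with the Sylvester--Uhlmann/H\"ahner completeness of products of solutions (invoked as a black box in the paragraph preceding the theorem) to conclude $n=\wi{n}$. Your write-up merely makes the CGO step slightly more explicit than the paper does.
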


\begin{remark}{\rm
From Lemma \ref{lem4.1} and the argument in the proof of Lemma \ref{orthogonality},
it follows that the Cauchy data of the solution
for two different refractive index $n$ and $\wi{n}$ coincide.
This result can also be extended straightforwardly to
the two dimensional case. Recently, Bukhgeim \cite{Bu08} proved that
the refractive index $n$ can also be uniquely determined from the set of
Cauchy data in the 2D case. Therefore, Theorem \ref{n-uni} also holds for the
two dimensional case.
}
\end{remark}

Combining Theorem \ref{thm3} with Theorem \ref{n-uni}, we have the following uniqueness result.

\begin{theorem}
Assume that $\la_0\;(\neq1)$, $\la_1\;(\neq1)$, $k_0$, $k_1$ and $k_2$ are
given positive numbers and that $k_2^2$ is not a Neumann eigenvalue of
$\Delta w+k_{2}^{2}nw=0$ in $\Om_2$.
Then the interfaces $S_{j}\;\;(j=0,1)$ and the inhomogeneity $n$ are uniquely
determined by the far field pattern $u^{\infty}(\widehat{x},\;d)$ for all $\widehat{x},\;d\in S$.
\end{theorem}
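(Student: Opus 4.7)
The plan is to prove this theorem as a direct synthesis of Theorem \ref{thm3} and Theorem \ref{n-uni}, which together already contain all the nontrivial analytic content. Suppose $(S_0,S_1,n)$ and $(\wi{S}_0,\wi{S}_1,\wi{n})$ are two configurations that produce the same far field pattern $u^\infty(\widehat{x},d)$ for all $\widehat{x},d\in S$. The goal is to show $S_0=\wi{S}_0$, $S_1=\wi{S}_1$ and $n=\wi{n}$ on $\Om_2$.

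First, I would invoke Theorem \ref{thm3} for the interfaces. The hypotheses $\la_0\neq1$ and $\la_1\neq1$ are built into our assumption, and Theorem \ref{thm3} delivers $S_0=\wi{S}_0$ and $S_1=\wi{S}_1$ from equality of the far field patterns. This step is where the geometric portion of the inverse problem is resolved, and it depends on the mixed reciprocity relation (Lemma \ref{Mrr}), the a priori estimates (Lemmas \ref{vapriori} and \ref{wapriori}), and the point-source blow-up argument carried out in Section \ref{sec3}.

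With the interfaces now identified, the two scattering problems share exactly the same geometric setup $(\Om_0,\Om_1,\Om_2)$ and boundary structure, so the only remaining unknown is the refractive index on $\Om_2$. I would then apply Theorem \ref{n-uni}: the assumption that $k_2^2$ is not a Neumann eigenvalue of $\Delta w+k_2^2 nw=0$ in $\Om_2$ is precisely the hypothesis needed for the completeness result (Lemma \ref{lem4.1}), which in turn underpins the orthogonality relation (Lemma \ref{orthogonality}) and the subsequent Sylvester--Uhlmann-type CGO argument. This yields $n=\wi{n}$.

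The main point is verifying that the hypotheses of the two earlier theorems are simultaneously satisfied and that the output of the first feeds correctly into the hypothesis of the second. There is no genuine obstacle here: the conditions $\la_0\neq1$, $\la_1\neq1$, and the Neumann-eigenvalue condition on $k_2^2$ are all included in the statement, and once $S_0=\wi{S}_0$, $S_1=\wi{S}_1$ is established the setting of Theorem \ref{n-uni} applies verbatim. Thus the proof reduces to stringing the two theorems together.
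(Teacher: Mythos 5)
Your proposal is correct and matches the paper exactly: the paper gives no separate proof for this theorem, stating only that it follows by combining Theorem \ref{thm3} (for $S_0=\wi{S}_0$, $S_1=\wi{S}_1$) with Theorem \ref{n-uni} (for $n=\wi{n}$), which is precisely the chaining you describe.
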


\section*{Acknowledgements}
This work was supported by the NNSF of China under grant No. 10671201.


\end{document}